\title{Coincidence of two Swan conductors of abelian characters}
\author{Kazuya Kato and Takeshi Saito}
\institution{Department of Mathematics, University of Chicago, Chicago, IL, USA}\\
\email{kkato@math.uchicago.edu}} \\
\institution{School of Mathematical Sciences, University of Tokyo, Tokyo 153-8914, Japan}\\
\email{t-saito@ms.u-tokyo.ac.jp}}
\date{\vspace{-5ex}} 
\journal{\'Epijournal de G\'eom\'etrie Alg\'ebrique} 
\newdimen\origiwspc
\font
\numberwithin{equation}{section}
\renewcommand{\p@equation}{\arabic{section}.\arabic{equation}\expandafter\@gobble}
\newcommand{\Sw}{{\rm Sw}}
\newcommand{\Br}{{\rm Br}}
\newcommand{\rsw}{{\rm rsw}}
\newcommand{\ab}{{\rm ab}}
\newcommand{\Gal}{{\rm Gal}}
\newcommand{\C}{{\mathbb C}}
\newcommand{\Q}{{\mathbb Q}}
\newcommand{\Z}{{\mathbb Z}}
\newcounter{para}
\numberwithin{para}{section}
\renewcommand{\thepara}{\arabic{section}.\arabic{para}}
\newenvironment{para}{\paragraph{\thepara.}\refstepcounter{para}}{}
\newtheorem{thm}[para]{Theorem}
\newtheorem{prop}[para]{Proposition}
\newtheorem{lem}[para]{Lemma}
\newenvironment{pf}{\proof[\proofname]}{\endproof}
\begin{document}


\maketitle



\begin{prelims}

\vspace{-0.55cm}

\def\abstractname{Abstract}
\abstract{ There are two ways to define the Swan conductor of an abelian character of the absolute Galois group of a complete discrete valuation field.  We prove that these two Swan conductors coincide.}

\keywords{Local field, ramification groups, Swan conductor}

\MSCclass{11S15; 14B25}

\vspace{0.15cm}

\languagesection{Fran\c{c}ais}{%

\vspace{-0.05cm}
{\bf Titre. Co\"{\i}ncidence de deux conducteurs de Swan des caract\`eres ab\'eliens} \commentskip {\bf R\'esum\'e.} Il y a deux fa\c{c}ons de d\'efinir le conducteur de Swan d'un caract\`ere ab\'elien du groupe de Galois absolu d'un corps de valuation discr\`ete complet. Nous montrons que ces deux conducteurs de Swan co\"{\i}ncident.}

\end{prelims}


\newpage

\setcounter{tocdepth}{1} \tableofcontents

\section{Introduction} 

\begin{para} Let $K$ be a complete discrete valuation field, let $\bar K$ be a separable closure of $K$, and let us consider $\chi \colon \Gal(\bar K/K)\to \C^\times$ a homomorphism which factors through $\Gal(L/K)$ for a finite cyclic extension $L\subset \bar K$ of $K$. There are two definitions of the Swan conductor of $\chi$, one is defined by using the logarithmic upper ramification filtration on $\Gal(\bar K/K)$ defined geometrically \cite{AS},  and the other is defined by using  the filtrations on the unit groups of complete discrete valuation fields and  cup products in Galois cohomology \cite{Ksd}. 

We prove  that the
two Swan conductors coincide.

\end{para}

\begin{para}\label{I} We briefly review the two Swan conductors, which we denote in this paper by $\Sw(\chi)$ and $\Sw^{\ab}(\chi)$, respectively.

$\Sw(\chi)$  is defined as follows. There is a decreasing filtration $\Gal(\bar K/K)_{\log}^t$ indexed by $t\in \Q_{\geq 0}$ on $\Gal(\bar K/K)$ by closed normal subgroups called the logarithmic upper ramification groups.  $\Sw(\chi)$ is defined to be the smallest  $t\in \Q_{\geq 0}$ such that $\chi(\Gal(\bar K/K)_{\log}^s)=\{1\}$ for all $s>t$ (such $t$ exists). See  \cite[Theorem 3.16]{AS}
and Section \ref{sec:ram_groups}.

$\Sw^{\ab}(\chi)$  is defined as follows. Fix an injection ${\mathbb Q}/{\mathbb Z}
\to {\mathbb C}^\times$, say $r\mapsto \exp(2\pi\sqrt{-1} r)$, and identify $\chi$ with an element of $H^1(K, {\mathbb Q}/{\mathbb Z})=H^2(K,{\mathbb Z})$. Then the cup-product with $\chi$ defines a homomorphism $K^\times \to \Br(K)$, denoted by $a\mapsto \{\chi, a\}$,  where $\Br(K)=H^2(K,{\mathbb G}_m)$ is the Brauer group of $K$ \cite[Chapitre X.4]{CL}.  $\Sw^{\ab}(\chi)$ is defined to be the smallest integer $n\geq 0$ such that $\{\chi, 1+{\mathfrak m}_K^n{\mathfrak m}_{K'}\}=0$ in $\Br(K')$  for any extension $K\to K'$ of complete discrete valuation fields (such $n$ exists; here ${\mathfrak m}_*$ denote the maximal ideals). See \cite{KKs}.
 \end{para}

\begin{thm}\label{th01} $\Sw(\chi)=\Sw^{\ab}(\chi)$. 

\end{thm}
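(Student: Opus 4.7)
The plan is to show that $\Sw$ and $\Sw^{\ab}$ define the same filtration on $H^1(K,\Q/\Z)=\Hom(\Gal(\bar K/K),\Q/\Z)$; the equality of conductors for each individual $\chi$ then follows by inspecting where the class of $\chi$ first enters the filtration. Writing $p$ for the residue characteristic of $K$, a character of order prime to $p$ is tame, and both Swan conductors vanish on it: $\Sw(\chi)=0$ because $\chi$ kills $\Gal(\bar K/K)_{\log}^t$ for every $t>0$ (the wild inertia being pro-$p$), while $\Sw^{\ab}(\chi)=0$ because cup-product with a tame character factors through the residue field and annihilates $1+\mathfrak{m}_{K'}$. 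Decomposing $\chi$ into its prime-to-$p$ and $p$-primary parts, we may therefore assume $\chi$ has order $p^n$.

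In this wild case, $\chi$ corresponds via Artin--Schreier--Witt theory to a Witt vector $\mathbf{a}=(a_0,\ldots,a_{n-1})\in W_n(K)$, and both conductors can be extracted from the valuations of the $a_i$. Rather than comparing the formulas directly, I would compare the two filtrations through their \emph{refined} Swan conductors: symbols
\[
\rsw\colon \fil_m H^1(K,\Q/\Z)/\fil_{m-1}H^1(K,\Q/\Z)\lra \Omega^1_K(\log)\otimes_{\sO_K}\mathfrak{m}_K^{-m}/\mathfrak{m}_K^{-m+1}
\]
carried by each filtration. If these symbols agree, an induction on $m$ shows that the two filtrations coincide. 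Agreement can be checked on Artin--Schreier--Witt generators: Kato's symbol is computed by an explicit cup-product with $d\log(1+u\pi^m)$, while the Abbes--Saito symbol is extracted from the geometry of a dilatation of the affine space along the Galois cover.

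The hard part is this explicit comparison. The two symbols are of fundamentally different natures---one cohomological, the other geometric via rigid-analytic dilatations---and matching them on wild characters over an imperfect residue field is the technical heart of the argument. A natural intermediate step is to first handle the perfect residue field case, where both theories reduce to Serre's classical Swan conductor and the equality is classical, and then to bootstrap by passing to a generic fiber in a family of test extensions, exploiting the base-change stability built into the definition of $\Sw^{\ab}$. Once the two filtrations agree, the integrality of $\Sw(\chi)$ for abelian $\chi$---not obvious a priori from the Abbes--Saito construction---falls out as a corollary.
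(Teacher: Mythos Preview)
Your outline has a structural gap that prevents it from covering the main case. You invoke Artin--Schreier--Witt to represent a $p$-primary character by a Witt vector $\mathbf a\in W_n(K)$, but this description is only available when $K$ itself has characteristic $p$. In mixed characteristic $(0,p)$ there is no such presentation; one has Kummer theory only after adjoining $p^n$-th roots of unity, which changes $K$ and its ramification data. Since the equal-characteristic case was already known (cf.\ case (iii) of \ref{cases}), your plan as written addresses precisely the part that is not new and leaves the mixed-characteristic case untouched. Moreover, even in characteristic $p$ you concede that the ``explicit comparison'' of the two refined symbols on Artin--Schreier--Witt generators is ``the technical heart,'' but you do not carry it out; what remains is an outline, not a proof.

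Your proposed bootstrap---settle the perfect residue field case and then pass to generic fibers---is also not the route that works. Reducing to perfect residue field destroys information: both conductors can drop under such a base change, so one cannot recover the original equality from the classical one. The paper goes in the opposite direction. It constructs (Theorem \ref{th2}, using an Epp-type argument) an extension $K'/K$ such that $e(LK'/K')=1$, the residue field $F'$ satisfies $[F':(F')^p]=p$, and crucially $\Omega^1_F(\log)\to\Omega^1_{F'}(\log)$ is \emph{injective}. This injectivity guarantees that the refined Swan conductor survives the base change, so by Lemma \ref{lmlogrsw}.2 and Proposition \ref{SwK'K} both $\Sw$ and $\Sw^{\ab}$ multiply exactly by $e(K'/K)$. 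One is thereby reduced not to the perfect case but to the case $[F:F^p]=p$, $e(L/K)=1$, where both conductors are computed by hand (Propositions \ref{prmain} and \ref{Swe=1}) via the minimal polynomial of a monogenic generator of ${\cal O}_L$ and seen to coincide. The key idea you are missing is this controlled base change to a residue field of $p$-rank one, together with the use of the refined Swan conductors to certify that nothing is lost in the passage.
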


\begin{para}\label{rsw}
We will also prove the coincidence of refined Swan conductors. Let $r=\Sw(\chi)=\Sw^{\ab}(\chi)$ and assume $r>0$. Then we have non-zero elements 
$$\rsw(\chi) \in \bar F\otimes_F {\mathfrak m}_K^{-r}/{\mathfrak m}_K^{-r+1} \otimes_{{\cal O}_K} \Omega^1_{{\cal O}_K}(\log),$$
$$\rsw^{\ab}(\chi) \in {\mathfrak m}_K^{-r}/{\mathfrak m}_K^{-r+1} \otimes_{{\cal O}_K} \Omega^1_{{\cal O}_K}(\log)$$
called the (logarithmic) refined Swan conductors. $\rsw(\chi)$  is defined in \cite{Sa} as a refined version of $\Sw(\chi)$ and $\rsw^{\ab}(\chi)$ is defined in \cite{KKs} as a refined version of $\Sw^{\ab}(\chi)$.  
The definition of $\rsw(\chi)$ is recalled in (\ref{eqrsw}).
\end{para}

\begin{thm}\label{th02}
$\rsw(\chi)=\rsw^{\ab}(\chi).$
\end{thm}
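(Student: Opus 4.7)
The plan is to upgrade the equality $\Sw(\chi) = \Sw^{\ab}(\chi)$ of Theorem \ref{th01} to the refined statement by showing that $\rsw(\chi)$ and $\rsw^{\ab}(\chi)$ represent the same linear functional on a common graded module, the matching being provided by local class field theory.

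Set $r = \Sw(\chi) = \Sw^{\ab}(\chi) > 0$. By the Abbes--Saito theory \cite{AS, Sa}, the graded quotient
\[
\gr^r_{\log} \Gal(\bar K/K) \ := \ \Gal(\bar K/K)^r_{\log} \big/ \Gal(\bar K/K)^{r+}_{\log}
\]
embeds into $\Hom_{\F_p}(\mathfrak{m}_K^{-r}/\mathfrak{m}_K^{-r+1} \otimes_{\sO_K} \Omega^1_{\sO_K}(\log),\, \bar F)$ via the refined Swan conductor pairing recalled in (\ref{eqrsw}), and $\rsw(\chi)$ is by definition the element of $\bar F \otimes_F \mathfrak{m}_K^{-r}/\mathfrak{m}_K^{-r+1} \otimes_{\sO_K} \Omega^1_{\sO_K}(\log)$ corresponding, under this pairing and the fixed embedding $\F_p \inj \bar F \inj \C$, to the restriction $\chi|_{\gr^r_{\log}}$. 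It therefore suffices to show that $\rsw^{\ab}(\chi)$ induces the same functional; the equality in the ambient module then follows, and in particular $\rsw(\chi)$ will automatically descend from $\bar F \otimes_F(-)$ to the $F$-rational subspace where $\rsw^{\ab}(\chi)$ lives.

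For this comparison I would invoke the characterization of $\rsw^{\ab}(\chi)$ from \cite{Ksd, KKs}: for $a \in 1 + \mathfrak{m}_K^r$ (and any CDVF extension $K'/K$), the class $\{\chi, a\} \in \Br(K')$ is given by an explicit residue formula involving $\rsw^{\ab}(\chi)$ and the image $\bar a \in \mathfrak{m}_K^r/\mathfrak{m}_K^{r+1}$. Via local class field theory, the reciprocity map $K^\times \to \Gal(\bar K/K)^{\ab}$ carries $1 + \mathfrak{m}_K^r$ into $\Gal(\bar K/K)^{r,\ab}_{\log}$, and the symbol pairing $a \mapsto \{\chi, a\}$ translates into the evaluation of $\chi$ on the image of $a$ in $\gr^r_{\log} \Gal(\bar K/K)^{\ab}$. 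Matching these two pairings identifies $\rsw^{\ab}(\chi)$ with $\rsw(\chi)$.

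The main obstacle is the graded-level compatibility of local class field theory with the logarithmic Abbes--Saito filtration: one must verify that the reciprocity map sends the unit filtration into the logarithmic ramification filtration with the \emph{correct} indexing, and that the two explicit residue formulas (Kato's for $\{\chi, a\}$ and Abbes--Saito's Rsw) produce the same bilinear form on the graded pieces. I would attack this by first approximating $\chi$ modulo characters of Swan conductor $< r$ and then (after possibly enlarging $K$) reducing to the case of an Artin-Schreier-Witt character, where both refined conductors admit explicit expressions in terms of a Witt-vector representative and the equality can be checked term by term. Norm compatibility of both sides under the extensions $K'/K$ then propagates the identity to the required generality, completing the proof.
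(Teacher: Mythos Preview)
Your proposal has a genuine gap. You invoke ``the reciprocity map $K^\times \to \Gal(\bar K/K)^{\ab}$'' and claim it carries $1+\mathfrak m_K^r$ into $\Gal(\bar K/K)^{r,\ab}_{\log}$, but for imperfect residue fields there is no such reciprocity map with the classical properties; this is precisely why $\Sw^{\ab}$ in \cite{KKs} is defined through the cup-product pairing $\{\chi,-\}$ valued in Brauer groups of auxiliary extensions $K'$, not through a reciprocity homomorphism. So the bridge you rely on to translate the symbol pairing into evaluation of $\chi$ on graded pieces of the Abbes--Saito filtration is missing, and constructing it is not easier than the theorem itself. Your fallback, reducing to Artin--Schreier--Witt characters, works only in positive characteristic, which is exactly the case already handled in \cite{AS2, Ya}; the new content of the theorem is the mixed-characteristic case, where $p$-power cyclic extensions are governed by Kummer theory and no such ``term by term'' Witt-vector check is available.

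The paper proceeds quite differently. One first reduces to $F$ of finite type over a perfect subfield, then uses Theorem \ref{th2} (built on the Epp-type Theorem \ref{th1}) to find an extension $K'/K$ of complete discrete valuation fields with $e(LK'/K')=1$, $[F':(F')^p]=p$ with $F'$ a function field of one variable over a perfect field, and $\Omega^1_F(\log)\hookrightarrow\Omega^1_{F'}(\log)$. The last injectivity guarantees that both $\rsw(\chi)$ and $\rsw^{\ab}(\chi)$ survive base change (Lemma \ref{lmlogrsw}.2 and Proposition \ref{SwK'K}.2), so the equality over $K$ is equivalent to the equality over $K'$. Over $K'$ one is now in the monogenic, ramification-index-one situation where both refined conductors are computed by the \emph{same} explicit formula $d\bar u/(\varphi'(v)(v-\sigma(v)))$: Proposition \ref{prmain} for $\rsw$ and Proposition \ref{Swe=1} for $\rsw^{\ab}$. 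The argument is thus a reduction to a single concrete calculation, not a direct comparison of class-field-theoretic and geometric pairings.
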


\begin{para}\label{cases} Theorem \ref{th01} was known, for example,  in the following cases. (The cases (i) and (ii) follow from Section 6.1 of \cite{AS} and from \cite{Ksd}. The case (iii) is shown in \cite[Corollary 9.12]{AS2}.)
The case (ii) is proved by comparing Propositions \ref{prmain}
and \ref{Swe=1}.
In the cases (i) and (ii),
the ring ${\cal O}_L$ is
generated by a single element over
${\cal O}_K$ and such an extension is also studied in \cite{Sp}.

\begin{enumerate}
\item[(i)] (The classical case.) The case where the residue field of $K$ is perfect.
\item[(ii)] The case where the residue field $F$ of $K$ is of characteristic $p$ such that $[F:F^p]=p$ and
 $\chi$ factors through $\Gal(L/K)$ for a finite cyclic extension $L/K$ whose ramification index $e(L/K)$ is one. 
\item[(iii)] The case where $K$ is of positive characteristic. 
\end{enumerate}

\noindent
In the positive characteristic case,
the results corresponding to Theorems \ref{th01} and \ref{th02}
in the non-logarithmic case are proved in \cite{AS2}
and \cite{Ya}.

\end{para}

\begin{para}\label{K'} Our method to prove Theorem \ref{th01} is to reduce it to the above case (ii) (not to the classical case (i)). In Theorem \ref{th2}, we prove
that for a finite cyclic extension $L/K$ such that $\chi$ factors through $\Gal(L/K)$, we can find  an extension of  complete discrete valuation fields $K\to K'$ such that $\Sw(\chi_{K'})= e(K'/K)\Sw(\chi)$, $\Sw^{\ab}(\chi_{K'})= e(K'/K)\Sw^{\ab}(\chi)$,  the residue field $F'$ of $K'$ satisfies $[F': (F')^p]=p$, and $e(LK'/K')=1$. 
The refined Swan conductors play important roles to find the field $K'$ above. 

Theorem \ref{th02} is proved also by the reduction to the case (ii). 
The authors would like to thank an anonymous referee
for pointing out that almost the same result
as the key step Proposition \ref{prmain}
is proved in \cite[Theorem 5.9]{Hu}.
\end{para}

\medskip

One of the authors (K.~K.) is partially supported by
NSF Award 1601861
and (T.~S.) is partially supported by JSPS 
Grant-in-Aid for Scientific Research (A) 26247002. 

\section{On the theorem of Epp}\label{s2}

The following theorem is not explicitly written in the paper \cite{Ep} of Epp, but the arguments there (with a correction in \cite{Ku} of an error in \cite{Ep}) actually prove this. 

\begin{thm}\label{th1}Let $K$ be a complete discrete valuation field whose residue field $F$ is of characteristic $p>0$, and let $L$ be a finite Galois extension of $K$. 
Then there exists a finite extension $K'$ of $K$ satisfying the following conditions {\rm (i)} and {\rm (ii)}.
\begin{enumerate}
\item[\rm (i)] $e(LK'/K')=1$. 
\item[\rm (ii)] The residue field of $K'$ is
a separable extension of that of $K$. 
\end{enumerate}
\end{thm}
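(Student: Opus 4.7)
The plan is to prove the theorem by reducing, via d\'evissage and induction on $[L:K]$, to the case of cyclic extensions of prime order, where the wild case is then handled by an iterative construction. First, after replacing $K$ by the maximal unramified subextension of $L/K$ (which has separable residue extension over $F$), I may assume $L/K$ is totally ramified, so that $\Gal(L/K)$ equals its inertia group. This group is solvable, with wild inertia a pro-$p$ group and tame quotient cyclic of order prime to $p$. Combining this with an inductive argument through a normal series with cyclic prime-order quotients---carefully arranged so that each intermediate extension preserves separability of the residue extension at each stage, which is the delicate aspect of the d\'evissage---reduces the problem to $L/K$ cyclic of prime order $\ell$.

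For the base case, if $\ell \neq p$ then $L/K$ is tame, $L = K(\pi^{1/\ell})$ for some uniformizer $\pi$, and $K' = K(\pi^{1/\ell})$ works immediately with $LK' = K'$ and trivial residue extension. If $\ell = p$, then in equal characteristic $L = K(y)$ is an Artin--Schreier extension with $y^p - y = a$ for some $a \in K$, while in mixed characteristic---after the tame preliminary of adjoining $\mu_p$---$L = K(a^{1/p})$ is a Kummer extension. The task becomes to find a finite $K'/K$ with separable residue extension such that the class of $a$ in $K/\wp(K)$ (respectively $K^\times/(K^\times)^p$) admits a representative in $\sO_{K'}$ (respectively $\sO_{K'}^\times$), equivalently $e(LK'/K') = 1$.

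The heart of the argument, and the step where Epp's original proof had a gap subsequently corrected by Kuhlmann, is the iterative construction of $K'$ in the wild base case. One enlarges $K$ by totally ramified Eisenstein extensions of the form $K(\varpi)$ with $\varpi^m = u\pi$ for suitable $u \in \sO_K^\times$ and $m \geq 1$; such extensions have trivial residue extension, so condition (ii) is preserved at each step. The parameters $u$ and $m$ are chosen so that, in the new valuation ring, the leading pole term of $a$ becomes a $p$-th power (or a $\wp$-image), which permits cancellation by a suitable $\wp(b)$ or $b^p$ and a reduction of the pole order. The main obstacle is termination: a naive iteration may not visibly decrease any obvious invariant and can appear to produce an ever-growing tower of extensions. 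Kuhlmann's correction tracks a refined invariant---essentially tied to the refined Swan conductor---to guarantee that only finitely many such ramified adjunctions are required before $a$ becomes cohomologically trivial, yielding the desired finite $K'/K$ with $e(LK'/K') = 1$ and residue field separable over $F$.
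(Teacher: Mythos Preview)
Your overall shape matches the paper---reduce via the $p$-group structure of wild inertia to a cyclic degree-$p$ situation, then use Artin--Schreier/Kummer. But the step you flag as ``the heart of the argument'' is left as narrative rather than argument, and the narrative misidentifies what makes it work. The paper does not iterate Eisenstein adjunctions and then invoke a termination invariant. Instead it makes a single dichotomy: writing $L=M(\alpha)$, $\alpha^p-\alpha=\sum_{n\in I}a_n\pi^{-n}+f_U$, one splits $I$ into $S$ and $T$ according to whether $a_n$ lies in the maximal perfect subfield $k=\bigcap_{r\geq0} F^{p^r}$. Lemma~\ref{FE} shows the $S$-part descends to $K$ itself. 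For $n\in T$ one has $a_n=b_n^{p^{r(n)}}$ with $b_n\notin E^p$ and finite $r(n)$; a \emph{single} adjunction of $\pi^{1/p^m}$ with $m>\max_n r(n)$ forces the dominant $T$-term to have coefficient outside $E^p$, so Lemma~\ref{le1}(3) (resp.\ Lemma~\ref{le2}(5) in mixed characteristic) yields $e=1$ directly. At most one further Artin--Schreier/Kummer adjunction over $K'$ handles the remaining case $n_S>n_T$. No refined Swan conductor appears anywhere in this proof; the controlling data is the depth $r(n)$ relative to the perfect core $k$, and the construction terminates in at most two explicit steps rather than an open-ended iteration.

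Your d\'evissage also elides a point the paper treats carefully: after induction one arrives not at ``$L/K$ cyclic of degree $p$'' but at condition~(*), with an intermediate field $M$ satisfying $e(M/K)=1$, \emph{purely inseparable} residue extension $E/F$, and $L/M$ cyclic of degree $p$ with $e(L/M)=p$. The extension $K'$ must then be built over $K$, not over $M$, precisely so that its residue field remains separable over $F$; this is exactly why the descent of the $S$-part to $K$ via Lemma~\ref{FE} is essential. You acknowledge this step as ``delicate'' but do not indicate how to carry it out, and the attribution of a refined-Swan-conductor argument to Kuhlmann's correction is not accurate---his fix is valuation-theoretic and does not involve ramification invariants of that kind.
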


In Theorem \ref{th1},
we may take $K'$ separable over $K$,
although we will not use this fact.
To see this,
it suffices to modify the construction 
of $K'=K(\pi')$ in the proof of
the case where $K$ is of characteristic $p>0$ and $T$ is not empty in \ref{pos}.

We use the following lemmas \ref{FE}, \ref{le1}  and \ref{le2} for the proof of Theorem \ref{th1}.

For a discrete valuation field $K$, let $\text{ord}_K$ be the normalized additive  valuation of $K$. In the case the residue field of $K$ is of characteristic $p>0$, let $e_K= \text{ord}_K(p)$. (So, $e_K=\infty$ if $K$ is of characteristic $p$.)
%
%
%
%

\begin{lem}\label{FE} Let $K$ be a complete discrete valuation field whose residue field $F$ is of characteristic $p>0$.
Let $k
=\bigcap_{r\geq 0}F^{p^r}$ be
the largest perfect subfield of $F$
and let $W(k)\to {\cal O}_K$ be
the canonical morphism from
the ring of Witt vectors.
Then the subring $\bigcap_{r\geq 0} ({\cal O}_K/p{\cal O}_K)^{p^r}\subset {\cal O}_K/p{\cal O}_K$ equals the image of $k\to {\cal O}_K/p{\cal O}_K$ \emph{(}in the case $K$ is of characteristic $p$, this means that $\bigcap_{r\geq 0} ({\cal O}_K)^{p^r}=k$\emph{)}.

\end{lem}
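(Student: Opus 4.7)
The plan is to identify $\bigcap_{r\geq 0}(\mathcal{O}_K/p\mathcal{O}_K)^{p^r}$ with the image of $k$ via a Teichm\"uller-type lift. Let $\omega\colon k \to \mathcal{O}_K$ denote the canonical multiplicative section of $\mathcal{O}_K \twoheadrightarrow F$ over $k$: in mixed characteristic, $\omega(a)$ is the Teichm\"uller representative $[a] \in W(k) \subset \mathcal{O}_K$; in equal characteristic $p$, one sets $\omega(a) = \lim_{r\to\infty} \tilde{a}_r^{p^r}$ for any lifts $\tilde{a}_r$ of $a^{1/p^r}$, the limit converging by additivity of iterated Frobenius (and $\omega$ then being additive as well). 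The easy inclusion $\omega(k) \bmod p\mathcal{O}_K \subset \bigcap_r (\mathcal{O}_K/p\mathcal{O}_K)^{p^r}$ is immediate from multiplicativity: for $a \in k$, perfectness gives $a = b^{p^r}$ with $b \in k$, whence already $\omega(a) = \omega(b)^{p^r}$ in $\mathcal{O}_K$.

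For the reverse inclusion, let $x \in \bigcap_r (\mathcal{O}_K/p\mathcal{O}_K)^{p^r}$ and pick a lift $\tilde x \in \mathcal{O}_K$. Its residue $\bar x \in F$ lies in $\bigcap_r F^{p^r} = k$, so it suffices to show $\tilde x \equiv \omega(\bar x) \pmod{p\mathcal{O}_K}$. For each $r$, choose $y_r \in \mathcal{O}_K$ with $y_r^{p^r} \equiv \tilde x \pmod{p\mathcal{O}_K}$ and $\bar y_r = \bar x^{1/p^r}$, the root existing in $k$ by perfectness. Writing $y_r = \omega(\bar y_r) + v_r$ with $v_r \in \mathfrak{m}_K$ and using $\omega(\bar y_r)^{p^r} = \omega(\bar x)$, the binomial expansion yields
\[
y_r^{p^r} - \omega(\bar x) \;=\; \sum_{k=1}^{p^r} \binom{p^r}{k}\,\omega(\bar y_r)^{p^r-k}\,v_r^k.
\]
Kummer's formula $v_p\bigl(\binom{p^r}{k}\bigr) = r - v_p(k)$ bounds $\mathrm{ord}_K$ of the $k$-th summand below by $(r-v_p(k))e_K + k$; for $1 \le k < p^r$ this is at least $e_K+1$ (since $r-v_p(k) \ge 1$), and for $k=p^r$ it equals $p^r$. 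Taking $r$ large enough that $p^r \ge e_K$ puts each summand in $p\mathcal{O}_K$, giving the desired congruence.

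The principal obstacle is this mixed characteristic estimate: one must balance the $p$-adic growth of the binomial coefficients against the fixed absolute ramification $e_K$, and perfectness of $k$ is used essentially to ensure that $\bar y_r$ lies in $k$ so that $v_r \in \mathfrak{m}_K$ in the first place. The equal characteristic $p$ case is a degenerate version with $e_K=\infty$ in which all inner binomial terms vanish outright from $(u+v)^{p^r}=u^{p^r}+v^{p^r}$, leaving only $v_r^{p^r}$ of $\mathrm{ord}_K \ge p^r$, and letting $r\to\infty$ forces the stronger equality $\tilde x = \omega(\bar x)$ in $\mathcal{O}_K$ itself. The subcase $\bar x=0$ of the main argument is handled directly, since $\bar y_r=0$ forces $y_r \in \mathfrak{m}_K$ and hence $y_r^{p^r} \in \mathfrak{m}_K^{p^r} \subset p\mathcal{O}_K$ for $r$ large, whence $x=0$.
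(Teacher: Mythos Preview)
Your proof is correct, but it takes a substantially more computational route than the paper's. The paper observes just three facts: (a) the image of $k$ lies in $A=\bigcap_r(\mathcal{O}_K/p\mathcal{O}_K)^{p^r}$; (b) the image of $A$ under $\mathcal{O}_K/p\mathcal{O}_K\to F$ lies in $\bigcap_r F^{p^r}=k$; and (c) $A\cap(\mathfrak m_K/p\mathcal{O}_K)=0$. From (c) the projection $A\to F$ is injective, and then (a) and (b) force $A$ to equal the image of $k$. Step (c) is precisely your ``subcase $\bar x=0$'' argument, and the paper does not spell it out further.

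What you do instead is carry out, for arbitrary $\bar x$, an explicit comparison of $\tilde x$ with the Teichm\"uller lift $\omega(\bar x)$ via the binomial expansion and Kummer's formula $v_p\binom{p^r}{j}=r-v_p(j)$. This is perfectly valid and has the virtue of being completely explicit, but once you have the $\bar x=0$ case you could have finished in one line as the paper does: the injectivity of $A\to F$ combined with $k\to A\to F$ being the inclusion and $\mathrm{im}(A\to F)\subset k$ already gives $A=\mathrm{im}(k)$. In effect your final paragraph already contains the paper's entire argument. A minor stylistic point: you reuse $k$ as the summation index while $k$ also denotes the perfect subfield, which is potentially confusing.
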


\begin{pf}
Let $A=\bigcap_{r\geq 0} ({\cal O}_K/p{\cal O}_K)^{p^r}$ denote the subring.
Then, $A\subset {\cal O}_K/p{\cal O}_K$ contains the image of $k$ and
the image of $A$
by ${\cal O}_K/p{\cal O}_K\to F$
is a subring of $k$.
Hence, the assertion follows from 
$A\cap ({\mathfrak m}_K/p{\cal O}_K)=0$.
\end{pf}

We do not give proofs of the following lemmas \ref{le1} and \ref{le2} which are straightforward.

\begin{lem}\label{le1}  Let $K$ be a complete discrete valuation field of characteristic $p>0$  and let  $F$ be its residue field. Consider the Artin-Schreier extension  $L=K(\alpha)$, $\alpha^p-\alpha=f\in K$. Let $\pi$ be a prime element of $K$. Let $E$ be the residue field of $L$.
\begin{enumerate}
\item[\rm (1)] If $f\in {\cal O}_K$, the extension $L/K$ is unramified,
possibly trivial. 
\item[\rm (2)] Assume that $-{\rm ord}_Kf
=n\geq 1$ is not divisible by $p$. Then $e(L/K)=p$ and $E=F$. 
\item[\rm (3)]
Assume that $f\in  u\pi^{-mp}+ \pi^{-mp+1}{\cal O}_K$
for some integer $m\geq 1$ and for some $u\in {\cal O}_K$ whose residue class $\bar u$ does not belong to $F^p$. Then $E= F(\bar u^{1/p})$ and $e(L/K)=1$.   
\end{enumerate}
\end{lem}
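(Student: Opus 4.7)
The plan is to analyse the valuation of $\alpha$ in $L$ via the Artin--Schreier equation $\alpha^p-\alpha=f$ and then read off the residue information. Throughout, I extend $\text{ord}_K$ to the unique additive valuation $v$ on $L$ with $v|_K=\text{ord}_K$, so that $v(L^\times)=\tfrac{1}{e(L/K)}\Z$; the denominator that appears will reveal $e(L/K)$.

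Before treating (2) and (3), I would check that $[L:K]=p$. Since $X^p-X-f$ is Artin--Schreier, it is either irreducible over $K$ or has a root $\alpha\in K$. In the latter case, since $v(f)<0$, we must have $v(\alpha)<0$, whence $p\,v(\alpha)=v(\alpha^p-\alpha)=v(f)$. In case (2) this forces $p\mid n$, a contradiction; in case (3) it gives $v(\alpha)=-m$, so $\gamma:=\pi^m\alpha\in\sO_K^\times$ reduces modulo $\mathfrak{m}_K$ to $\bar\gamma$ with $\bar\gamma^p=\bar u$, again contradicting $\bar u\notin F^p$.

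For (1), the polynomial $X^p-X-f\in\sO_K[X]$ has unit derivative $-1$, hence is separable, and its reduction is either split over $F$---so a root lifts by Hensel's lemma and $L=K$---or irreducible of degree $p$ over $F$, in which case the residue extension already realises the full degree $[L:K]=p$ and $L/K$ is unramified. For (2), the same dichotomy on $v(\alpha)$ used above forces $v(\alpha)<0$ and $p\,v(\alpha)=v(f)=-n$, so $v(\alpha)=-n/p\notin\Z$; this forces $e(L/K)=p$, hence $f(L/K)=1$ and $E=F$.

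For (3), write $f=u\pi^{-mp}+\pi^{-mp+1}g$ with $g\in\sO_K$ and set $\beta=\pi^m\alpha$. The same valuation computation gives $v(\alpha)=-m$, so $\beta\in\sO_L^\times$. Multiplying $\alpha^p-\alpha=f$ by $\pi^{mp}$ yields
$$\beta^p = \pi^{mp}f + \pi^{m(p-1)}\beta = u + \pi g + \pi^{m(p-1)}\beta,$$
and since $m\geq 1$ the last term vanishes modulo $\mathfrak{m}_L$, giving $\bar\beta^p=\bar u$ in $E$. Thus $F(\bar u^{1/p})\subseteq E$; since $\bar u\notin F^p$ this subextension has degree $p$ over $F$, and combined with $[L:K]=p$ this forces $e(L/K)=1$ and $E=F(\bar u^{1/p})$. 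The only subtlety throughout is bookkeeping with the two valuation normalisations, which the $v$-convention sidesteps; once that is fixed, each part is a short, direct calculation.
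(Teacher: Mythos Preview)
Your proof is correct. The paper does not actually prove this lemma; immediately before it the authors write ``We do not give proofs of the following lemmas \ref{le1} and \ref{le2} which are straightforward.'' Your argument supplies precisely the kind of direct verification they have in mind: the Artin--Schreier dichotomy together with Hensel's lemma for (1), the valuation computation $pv(\alpha)=v(f)$ forcing $e(L/K)=p$ in (2), and the substitution $\beta=\pi^m\alpha$ reducing to $\bar\beta^p=\bar u$ in (3). The only point where one might want a word more is in (1), where the conclusion that $L/K$ is unramified tacitly uses that $\mathcal{O}_K[\alpha]=\mathcal{O}_K[X]/(X^p-X-f)$ is \'etale over $\mathcal{O}_K$ (the derivative being a unit) and hence equals $\mathcal{O}_L$; but this is standard and in keeping with the level of detail the paper expects.
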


\begin{lem}\label{le2}  Let $K$ be a complete discrete valuation field of mixed characteristic $(0,p)$. Let  $F$ be its residue field. Assume that $K$ contains a primitive $p$-th root $\zeta_p$ of $1$. Consider the Kummer extension  $L=K(\alpha)$, $\alpha^p=a\in K^\times$. Let $\pi$ be a prime element of $K$.  Let $E$ be the residue field of $L$.
\begin{enumerate}
\item[\rm (1)] If $a\in 1+(\zeta_p-1)^p{\cal O}_K$, the extension $L/K$ is unramified, possibly trivial. 
\item[\rm (2)] Assume that $\text{ord}_K(a)$ is not divisible by $p$. Then $e(L/K)=p$ and $E=F$.
\item[\rm (3)] Assume that $a\in ({\cal O}_K)^\times$ and that the residue class $\bar a$ of $a$ is not contained in $F^p$. Then $E=F(\bar a^{1/p})$ and $e(L/K)=1$. 
\item[\rm (4)] Assume that $a\in  (1+\pi^n u)(1+\pi^{n+1}{\cal O}_K)$ for some integer $n$ not divisible by $p$ such that $1\leq n< e_Kp/(p-1)$ and for some $u\in ({\cal O}_K)^\times$. Then $e(L/K)=p$ and $E=F$.  
\item[\rm (5)] Assume that $a\in  (1+\pi^{mp} u)(1+\pi^{n+1}{\cal O}_K)$ for some integer $m$ such that $1\leq n=mp< e_Kp/(p-1)$ and for some $u\in {\cal O}_K$ whose residue class $\bar u$ does not belong to $F^p$. Then $E= F(\bar u^{1/p})$ and $e(L/K)=1$.  
\end{enumerate}
\end{lem}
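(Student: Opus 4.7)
The plan is to set $\beta=\alpha-1$ (or, in (1), the normalized variant $\gamma=(\alpha-1)/(\zeta_p-1)$) and analyze the identity $(\beta+1)^p=a$, i.e.
$$\beta^p+\sum_{k=1}^{p-1}\binom{p}{k}\beta^k=a-1,$$
using three standard facts: $\mathrm{ord}_K\binom{p}{k}=e_K$ for $1\le k\le p-1$, $\mathrm{ord}_K(\zeta_p-1)=e_K/(p-1)$, and $p=u_0(\zeta_p-1)^{p-1}$ for some unit $u_0$ (from $\Phi_p(1)=p$). Parts (2) and (3) are immediate: in (2), $p\,\mathrm{ord}_L(\alpha)=e(L/K)\,\mathrm{ord}_K(a)$ combined with $\gcd(\mathrm{ord}_K(a),p)=1$ forces $p\mid e(L/K)$, hence $e(L/K)=p$ and $E=F$; in (3), reducing $\alpha^p=a$ modulo $\mathfrak m_L$ gives $\bar\alpha^p=\bar a$ with $\bar a\notin F^p$, so $E\supseteq F(\bar a^{1/p})$ already has degree $p$ over $F$, forcing $e(L/K)=1$ and $E=F(\bar a^{1/p})$.

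For (4) and (5), observe that $\bar a=1$ in $F$ forces $\bar\alpha=1$ in the characteristic-$p$ residue field $E$, hence $s:=v(\beta)>0$ where $v=\mathrm{ord}_L$; write $t:=e(L/K)$. Since $s>0$, the minimum of $v\bigl(\binom{p}{k}\beta^k\bigr)=te_K+ks$ over $k\in\{1,\ldots,p-1\}$ is attained at $k=1$, so the identity gives $\min(ps,te_K+s)=tn$. In case (4), with $\gcd(n,p)=1$ and $n<e_Kp/(p-1)$, assume for contradiction $t=1$: then $ps=n$ contradicts $\gcd(n,p)=1$, while $e_K+s=n$ (which requires $s\ge e_K/(p-1)$) yields $n\ge e_Kp/(p-1)$, also excluded. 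A direct check shows $a$ is not a $p$-th power in $K$, since any $b\in1+\mathfrak m_K$ has $\mathrm{ord}_K(b^p-1)$ either divisible by $p$ or $\ge e_Kp/(p-1)$. Hence $L/K$ is a nontrivial degree-$p$ extension with $t=p$, and $E=F$. In case (5), one finds $s=mt$ in either regime, so $\beta=\pi^m w$ with $w\in\mathcal O_L^\times$; dividing the identity by $\pi^{mp}$ and reducing modulo $\mathfrak m_L$ yields $\bar w^p=\bar u$ in $E$. If $t=p$ then $E=F$ would force $\bar u\in F^p$, contradicting the hypothesis; hence $t=1$, and $\bar u\in E^p\setminus F^p$ gives $E=F(\bar u^{1/p})$.

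For (1), substitute $\alpha=1+(\zeta_p-1)\gamma$; since $a-1\in(\zeta_p-1)^p\mathcal O_K$, the resulting identity may be divided by $(\zeta_p-1)^p$. The coefficient of $\gamma^k$ becomes $\binom{p}{k}(\zeta_p-1)^{k-p}$, of valuation $e_K(k-1)/(p-1)$: it is a unit at $k=1$ (equal to $u_0^{-1}$) and has strictly positive valuation for $2\le k\le p-1$. This produces an Artin--Schreier-like equation
$$\gamma^p+u_1\gamma+(\text{positive-valuation terms})=c\in\mathcal O_K,$$
with $u_1\in\mathcal O_K^\times$, whose reduction $T^p+\bar u_1 T-\bar c\in F[T]$ is separable since $\bar u_1\ne0$. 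Hensel's lemma lifts each simple root in $\bar F$ to an element of the corresponding unramified extension of $K$, exhibiting $L/K$ as unramified (and trivial exactly when the reduction splits over $F$). The main obstacle throughout is the valuation bookkeeping needed to exclude hidden cancellations among the middle binomial terms and the intermediate $p\beta^k$-terms; this is handled uniformly by the estimate $v\bigl(\binom{p}{k}\beta^k\bigr)=te_K+ks$ together with the standing bounds $s>0$ and $n<e_Kp/(p-1)$.
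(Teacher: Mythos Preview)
The paper does not prove this lemma: immediately before Lemma~\ref{le1} the authors write that they ``do not give proofs of the following lemmas~\ref{le1} and~\ref{le2} which are straightforward.'' Your argument correctly supplies these omitted details via the standard valuation bookkeeping on $(\beta+1)^p=a$, and the Artin--Schreier substitution $\alpha=1+(\zeta_p-1)\gamma$ in part~(1) is exactly the mechanism the paper itself invokes later in~\ref{mixed}. Two cosmetic remarks: in~(1) the coefficient $p(\zeta_p-1)^{1-p}$ is $u_0$ rather than $u_0^{-1}$ (harmless, since only its being a unit---in fact $\equiv -1$ mod $\mathfrak m_K$---is used); and in~(5) the phrase ``in either regime'' is slightly misleading, since the hypothesis $n<e_Kp/(p-1)$ in fact forces the regime $ps<te_K+s$, whence $s=mt$.
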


\begin{para}\label{start} We start the proof of Theorem \ref{th1}.  

First, we reduce the theorem to the case (*) below.
Let $K_1\subset L$ be the maximum tamely ramified
extension of $K$.
Then, since $LK_1=L$ and the residue
field of $K_1$ is a separable extension
of that of $K$, 
we may assume that ${\rm Gal}(L/K)$
equals the inertia subgroup $I$ and
is a $p$-group.

We prove the reduction by induction on
the order of $I$.
We may assume that $L\neq K$.
Then, since ${\rm Gal}(L/K)$
is nilpotent, 
there exists a subextension $L'\subset L$ 
such that $L'$ is a Galois extension over $K$
and that $L$ is a cyclic extension of $L'$ of degree $p$.
By induction hypothesis,
there exists a finite extension $K'_1$ of
$K$ such that
$e(L'K'_1/K'_1)=1$ and satisfying (ii).
If $e(LK'_1/L'K'_1)=1$, there is nothing to prove.
Otherwise,
for the maximum unramified extension
$K'_2$ of $K'_1$ inside $M'_1=L'K'_1$,
the extensions
$K'_2\subset M'_1\subset LK'_1$ satisfies 
the condition (*).

\begin{enumerate}
\item[(*)] There exists a field $M$ such that $K\subset M\subset L$, $e(M/K)=1$
and that $L$ is a cyclic extension of $M$ of degree $p$ and $e(L/M)=p$. 
The residue field $E$ of $M$
is a purely inseparable extension 
of the residue field $F$ of $K$.
\end{enumerate}
\end{para}

\begin{para}\label{pos}
We prove Theorem \ref{th1} in the case $K$ is of characteristic $p$.  Let $M$ be as in (*) in \ref{start}. We may assume $M=E((\pi))$ with $\pi$  a prime element of $K$. We can write $L=M(\alpha)$ where $\alpha^p-\alpha=f=f_I+f_U$ with $f_I, f_U\in M$ such that: $f_I=\sum_{n\in I} a_n \pi^{-n}$ where $I$ is a finite subset of $\Z_{>0}$ and  $a_n\in E^\times$, and $f_U\in {\cal O}_M$.  By Lemma \ref{le1} (1) applied to the extension $L/M$, $I$ is not empty because $L/M$ is not unramified. .

In the following, we use the fact that for $u,v\in M$ such that $u\equiv v \bmod \{w^p-w\;|\; w\in M\}$,  the extension $M(\beta)$, $\beta^p-\beta=u$, of $M$ is the same as that given by $v$. 
If $n\in I$ is divisible by $p$ and $a_n\in E^p$, we have $a_n\pi^{-n}\equiv a_n^{1/p}\pi^{-n/p}\bmod \{w^p-w\;|\; w\in M\}$ and hence we can replace $a_n\pi^{-n}$ by $a_n^{1/p}\pi^{-n/p}$. Hence we may (and do) assume that if $n\in I$ is divisible by $p$, then $a_n\notin E^p$. 

Let $S$ be the subset of $I$ consisting of all $n\in I$ such that $a_n\in \bigcap_{r\geq 0} E^{p^r}
=\bigcap_{r\geq 0} F^{p^r}=k$, and let $T=I\smallsetminus S$.
Note that if $n\in S$, then $n$ is not divisible by $p$. 
By Lemma \ref{FE}, we have $a_n\in 
k\subset {\cal O}_K$ for $n\in S$. Hence $f_S\in K$. 

Assume first $T$ is empty. Then $f_I=f_S\in K$. For  $K'=K(\alpha_S)$ with $\alpha_S^p-\alpha_S=f_S$, the residue field of $K'$ coincides with $F$ by Lemma \ref{le1} (2) applied to $K'/K$, and the extension $LK'/MK'$ is unramified by Lemma \ref{le1} (1) applied to $LK'/MK'$. 

Assume that $T$ is not empty. For $n\in T$, write $a_n=b_n^{p^{r(n)}}$ where $b_n\in E$, $r(n)\geq 0$, and $b_n$ is not a $p$-th power in $E$. Take an integer $m$ such that $m>r(n)$ for any $n\in T$. For $n\in S$, write $a_n=b_n^{p^m}$ ($b_n\in
k\subset {\cal O}_K$).

Let $K'= K(\pi')$ where $\pi'$ is a $p^m$-th root of $\pi$ and let $M'=MK'$, $L'=LK'$. Then 
$$f_I\equiv   f_S+f_T \bmod \{w^p-w\;|\; w\in M'\},$$ $$f_S= \sum_{n\in S} b_n(\pi')^{-n}, \quad f_T=\sum_{n\in T} b_n(\pi')^{-np^{m-r(n)}}.$$ 
Note that $f_S\in k((\pi'))\subset K'$ by Lemma \ref{FE}. Let $n_S:= \max(S)$ and $n_T:= \max\{np^{m-r(n)}\;|\;n \in T\}$. 
If $S$ is empty, we set $n_S=1$ so that we have $n_S<n_T$. Since $n_S$ is not divisible by $p$ and $n_T$ is divisible by $p$, we have $n_S\neq n_T$. For the proof of Theorem \ref{th1}, it is sufficient to prove the following Claim 1 and Claim 2.

\medskip

{\bf Claim 1.} If $n_S<n_T$, then
$e(L'/M')=1$.

\medskip

{\bf Claim 2.} If $n_S>n_T$, let $K''=K'(\beta)$ where $\beta^p- \beta = f_S$ and let $M''= MK''$, $L''=LK''$. Then the residue field of $K''$ coincides with that of $K$ and $e(L''/M'')=1$. 

\medskip

We first prove 

\medskip

{\bf Claim 3.} There is a unique $n\in T$ such that $np^{m-r(n)}=n_T$. 
\medskip

We prove Claim 3. If $n, n'\in T$, $n>n'$  and $np^{m-r(n)}= n'p^{m-r(n')}$, then by $n= n'p^{r(n)-r(n')}>n'$, we have $p|n$. Hence $a_n\notin E^p$ and $r(n)=0$. 
This contradicts to $r(n)>r(n')$.

Claim 1 follows from Claim 3 and Lemma \ref{le1} (3) applied to the extension $L'/M'$. 

We prove Claim 2. We have $e(K''/K')=p$ by Lemma \ref{le1} (2) applied to $K''/K'$.  If $\tau$ denotes a prime element of $K''$, the residue class of the unit $\tau^p(\pi')^{-1}$ is a $p$-th power. Claim 2 follows from this and Claim 3, and from Lemma \ref{le1} (3) applied to the extension $L''/M''$.

\end{para}

\begin{para}\label{mixed} We prove Theorem \ref{th1} in the case 
$K$ is of mixed characteristic $(0, p)$. We may assume that $K$ contains a primitive $p$-th root $\zeta_p$ of $1$. Note that $\text{ord}_K(\zeta_p-1)=e_K/(p-1)$.  Let $M$ be as in (*) in \ref{start}. We have $L=M(\alpha)$, $\alpha^p=a$
for $a\in M^\times$. 

The proof consists of two steps. In Step 1, we show that we may assume $a\in 1+p{\cal O}_M$. In Step 2, we give the proof assuming $a\in 1+p{\cal O}_M$.

Let $E$ be the residue field of $M$ and take a ring homomorphism $E\to {\cal O}_M/p{\cal O}_M$ such that the induced map $E\to {\cal O}_M/{\mathfrak m}_M=E$ is the identity map,
and its lifting $\iota \colon E\to {\cal O}_M$. Let $\pi$ be a prime element of $K$.

Step 1. Write 
$a\equiv  c \prod_{n\in T} c_n\bmod 1+p{\cal O}_M$ where $T$ is a subset of $\{0,\dots, e_M-1\}$ and $c_n$ ($n\in T$) and $c$ are elements of $M^\times$ of the following form. If $0\in T$, $c_0=\iota(b)$ for some $b\in E$ such that $b\notin E^p$. If $n\in T$ and $n\geq 1$, $c_n=1+\pi^n\iota(b)$ for some $b\in E$ such that $b\notin E^p$. The first term
$c$ is a product of a power of $\pi$ and elements of the form $1+\pi^m \iota(b)$ with $b\in E^p$
for some integer $m\geqq 0$. 

Let $K'=K(\pi^{1/p})$, $M'=MK'$, $L'=LK'$. Then we have $c\in ((M')^\times)^p(1+p{\cal O}_{M'})$ 
since the map $x\mapsto x^p$ on ${\cal O}_{M'}/p{\cal O}_{M'}$ is a ring homomorphism. Hence
if $T\neq \varnothing$,  we have $e(L'/M')=1$  by Lemma \ref{le2} (3) and (5) applied to the extension $L'/M'$.
If $T=\varnothing$,  we also have $L'=M'(\beta)$ with $\beta^p\in 1+p{\cal O}_{M'}$.  Thus, the assertion is reduced to the case where $L=M(\alpha)$, $\alpha^p=a$
for $a\in 1+p{\cal O}_M$.

Step 2. Assume $L=M(\alpha)$, $\alpha^p=a\in 1+p{\cal O}_M$. We have an isomorphism  $$ (\zeta_p-1)^{-1}{\cal O}_M/{\cal O}_M\to (1+p{\cal O}_M)/((1+(\zeta_p-1)^p{\cal O}_M)\;;\;x\mapsto 1+(\zeta_p-1)^px$$ (from the additive group to the multiplicative group). This isomorphism maps $x^p-x$ for $x\in (\zeta_p-1)^{-1}{\cal O}_M$ such that $x^p\in (\zeta_p-1)^{-1}{\cal O}_M$ 
to a $p$-th power because
$$(1+(\zeta_p-1)x)^p \equiv 1+(\zeta_p-1)^p(x^p-x) \bmod 1+(\zeta_p-1)^p{\cal O}_M.$$  Hence we have a situation similar to the theory of Artin-Schreier extension, and the rest of the proof, which is given below, is similar to  the proof of the case where $K$ is of characteristic $p$ in \ref{pos}.

We have  $a= a_Ia_U$ with $a_I=1+\sum_{n\in I} ((\zeta_p-1)^p\pi^{-n})\iota(a_n)$
where $I$ is a subset of $\{n\in \Z\;|\; 1\leq n \leq e_M/(p-1)\}$  and $a_n\in E^\times$ and $a_U\in 1+(\zeta_p-1)^p{\cal O}_M$. Note that we have $(\zeta_p-1)^p\pi^{-n}\in p{\cal O}_M$ for $n\in I$. By Lemma \ref{le2} (1) applied to the extension $L/M$, $I$ is not empty. We may assume that if $n\in I$ and $n$ is divisible by $p$, then $a_n$ is not a $p$-th power in $E$. Let $k= \bigcap_{r\geq 0} F^{p^r}= \bigcap_{r\geq 0} E^{p^r}$,
$S=\{n\in I\;|\; a_n \in k\}$ and let $T=I\smallsetminus S$.  

If $T$ is empty, by Lemma \ref{FE}, we have $a_I \equiv c \bmod (M^\times)^p$ where $c= 1+ \sum_{n\in S} ((\zeta_p-1)^p\pi^{-n})[a_n]$ for 
the Teichm\"uller lifting $[a_n]\in W(k)^\times \subset {\cal O}_K^\times$. Let  $K'=K(c^{1/p})$. Then  the residue field of $K'$ is $F$  by Lemma \ref{le2} (4) applied to the extension $K'/K$
and the residue field of $K'$ is the same as that of $K$, and the extension $LK'/MK'$ is unramified by Lemma \ref{le2} (1) applied to $LK'/MK'$. 

Assume now that $T$ is not empty. For $n\in T$, define
$b_n\in E\smallsetminus E^p$
and $r(n)\geq 0$ as in  \ref{pos}.
Further take an integer $m$ such that $m>r(n)$ for any $n\in T$
and $b_n\in k$ for $n\in S$ as in  \ref{pos}.

Let $K'= K(\pi')$ where $\pi'$ is a $p^m$-th root of $\pi$ and let $M'=MK'$, $L'=LK'$ Then by Lemma \ref{FE}, 
$$a_I \equiv a_Sa_T\bmod (1+(\zeta_p-1)^p{\cal O}_{M'}),$$ $$K'  \ni a_S= 1+\sum_{n\in S} ((\zeta_p-1)^p(\pi')^{-n})[b_n], \quad a_T=1+\sum_{n\in T} ((\zeta_p-1)^p(\pi')^{-np^{m-r(n)}})\iota(b_n)$$ 
where $[b_n]
\in W(k)^\times \subset {\cal O}_K^\times$ for $n\in S$ is the Teichm\"uller lifting of $b_n$. Let $n_T:= \max\{np^{m-r(n)}\;|\;n \in T\}$ and $n_S:= \max(S)$. Since $n_S$ is not divisible by $p$ and $n_T$ is divisible by $p$, we have $n_S\neq n_T$. For the proof of Theorem \ref{th1}, it is sufficient to prove the following Claim 1 and Claim 2.

\medskip

{\bf Claim 1.} If $n_S<n_T$, then
$e(L'/M')=1$. 

\medskip

{\bf Claim 2.} If $n_S>n_T$, let $K''=K'(a_S^{1/p})$. Then the residue field of $K''$ coincides with that of $K$ and $e(L''/M'')=1$ where $L''=LK''$ and $M''=MK''$.

\medskip

We first prove 

\medskip

{\bf Claim 3.} There is a unique $n\in T$ such that $np^{m-r(n)}=n_T$. 
\medskip

The proof of Claim 3 is similar to  that of Claim 3 in \ref{pos}. Claim 1 follows from Claim 3 and Lemma \ref{le2} (5) applied to the extension $L'/M'$. 
We prove Claim 2. The residue field of $K''$ is $F$ by Lemma \ref{le2} (4) applied to the extension $K''/K'$ and we have $e(K''/K')=p$.  If $\tau$ denotes a prime element of $K''$, the residue class of the unit $\tau^p(\pi')^{-1}$ is a $p$-th power. Claim 2 follows from this and  Claim 3 and from Lemma \ref{le2} (5) applied to the extension $L''/M''$. 

\end{para}

\section{Some extensions of complete discrete valuation fields}

\begin{thm}\label{th2} Let $K$ be a complete discrete valuation field whose residue field $F$  is of characteristic $p>0$. Let $L/K$ be a finite Galois extension. Then there is an extension $K'/K$ of complete discrete valuation fields satisfying the following conditions {\rm (i)}--{\rm (iii)}. Let $F'$ be the residue field of $K'$.
\begin{enumerate}
\item[\rm (i)] $e(LK'/K')=1$. 
\item[\rm (ii)] $[F':(F')^p]=p$. 
\item[\rm (iii)] The map $\Omega^1_F(\log) \to \Omega^1_{F'}(\log)$ is injective \emph{(}here and in the following $\Omega^1_F(\log)=F\otimes_{{\cal O}_K} \Omega^1_{{\cal O}_K}(\log)$\emph{)}. 
\end{enumerate}
If $F$ is finitely generated over a perfect subfield $k$, we can replace {\rm (ii)} by the following  stronger condition {\rm (ii)'}.
\begin{enumerate}
\item[\rm (ii)'] There is a perfect subfield $k'$ of $F'$ such that  $F'$ is finitely generated and of transcendence degree $1$ over $k'$.
\end{enumerate}
\end{thm}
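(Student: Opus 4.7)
The plan is to construct $K'$ in two stages: first build an intermediate complete discrete valuation field $K_1/K$ that adjusts the residue field to the form required by (ii)' (and hence (ii)) while also establishing the logarithmic differential injectivity (iii), then apply Theorem \ref{th1} (Epp's theorem) to $LK_1/K_1$ in order to eliminate the ramification and obtain (i).

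Stage~1 is the heart of the argument. I will construct $K_1/K$ with $e(K_1/K) = 1$ and residue field of the form $F_1 = k_1(u)$, where $k_1$ is a perfect subfield containing the largest perfect subfield $k$ of $F$ and $u$ is transcendental over $k_1$. Such a shape immediately gives (ii), and (ii)' in the finitely generated case. Concretely: choose a $p$-basis $t_1, \ldots, t_d$ of $F$ over $k$, introduce new transcendentals $v_1, \ldots, v_{d-1}$, set $k_1 := k(v_1, \ldots, v_{d-1})^{\mathrm{perf}}$ and $F_1 := k_1(u)$, and define a $k$-algebra embedding $\phi \colon F \hookrightarrow F_1$ sending each $t_i$ to a suitable ``transverse'' element of $k(v_1, \ldots, v_{d-1}, u)$. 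For instance, when $d = 2$ and $n$ is coprime to $p$, I may take $\phi(t_1) = u + v_1$ and $\phi(t_2) = u^n + v_1^n$. In equal characteristic, $K_1$ is then $F_1((\pi_1))$ with $\phi$ extended by sending $\pi$ to the uniformizer $\pi_1$; the mixed characteristic case is handled in parallel via Cohen rings.

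The nontrivial verification is (iii). Since $\Omega^1_{F_1/\F_p}$ is one-dimensional over $F_1$ (with basis $du$), each $d\phi(t_i)$ is a scalar multiple of $du$, and (iii) reduces to $F$-linear independence of the derivatives $\partial\phi(t_i)/\partial u$ in $F_1$. For the transverse choice above, $\phi(F)$ is the proper subfield of $F_1$ isomorphic to the field of symmetric functions in $u, v_1, \ldots, v_{d-1}$, while the non-symmetric ratios of these derivatives (e.g., $u^{n-1}$ in the $d=2$ example) do not lie in $\phi(F)$; a direct field-theoretic computation then shows that any $F$-linear relation among the $d\phi(t_i)$ must be trivial. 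The $d\log\pi$ term survives independently in the complementary $d\log\pi_1$ component because $e(K_1/K) = 1$. Stage~2 is immediate: apply Theorem \ref{th1} to $LK_1/K_1$ to obtain a finite extension $K'/K_1$ with $e(LK'/K') = 1$ and $F'/F_1$ a finite separable extension. A finite separable residue extension preserves $[F_1 : F_1^p] = p$ (since $\Omega^1_{F'/F_1} = 0$), preserves (ii)' (one remains finitely generated of transcendence degree one over the same $k_1$), and preserves the $\Omega^1(\log)$-injectivity; composing with Stage~1 gives all of (i), (ii), (ii)', and (iii) for $K'/K$.

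The main obstacle is clearly Stage~1. A naive attempt to reduce $[F : F^p]$ by adjoining $p^\infty$-th roots of part of a $p$-basis of $F$ would collapse the corresponding differentials $dt_i$ in the residue field and violate (iii). The resolution is to instead enlarge the perfect subfield transcendentally (via new variables $v_i$) and embed $F$ transversally into $F_1$, so that each $t_i$ retains a nonzero differential; the most delicate point is choosing the $\phi(t_i)$ so that the $d$ resulting derivatives remain $F$-linearly independent in the one-dimensional $F_1$-module $\Omega^1_{F_1/\F_p}$, which is verified by the explicit symmetric-function argument above.
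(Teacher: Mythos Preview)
Your two-stage strategy (adjust the residue field, then apply Epp) mirrors the paper's, but Stage~1 as written does not work. Prescribing $\phi(t_i)$ for a $p$-basis does not define a homomorphism on all of $F$, since $F$ is in general strictly larger than $k(t_1,\dots,t_d)$; worse, a genus obstruction can rule out any embedding $\phi\colon F\hookrightarrow F_1=k_1(u)$ with $k_1=k(v_1,\dots,v_{d-1})^{\mathrm{perf}}$. For instance, take $k=\overline{{\mathbb F}_p}$ with $p\neq 2,3$ and $F=k(x,y,z)$ with $z^2=x^3+1$, so $d=2$ with $p$-basis $\{x,y\}$. If such a $\phi$ existed and $\phi(x)\notin k_1$, then $\phi(x)$ is transcendental over $k_1$ and $k_1(\phi(x),\phi(z))\subset k_1(u)$ is the function field of the elliptic curve $z^2=x^3+1$ over $k_1$, contradicting L\"uroth's theorem. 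If instead $\phi(x)\in k_1$, then also $\phi(z)\in k_1$ (a purely transcendental extension contains no new algebraic elements), and both lie in some $k(v_1^{1/p^n})\cong k(w)$; L\"uroth over $k$ then forces $k(\phi(x),\phi(z))$ to be rational, again contradicting genus~$1$. So the embedding you need simply fails to exist, and your ``symmetric functions'' picture tacitly assumes $F=k(t_1,\dots,t_d)$ is itself rational.

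The paper avoids this by building the extension \emph{relative} to $K$ rather than abstractly. It first passes (Proposition~\ref{p1}) to the completion of ${\cal O}_K[T,U^{\pm1}]/(UT^p-\pi)$ localized at $(T)$, which sends $d\log\pi$ to $dU/U$ and hence maps $\Omega^1_F(\log)$ injectively into the non-log part $\Omega^1_{F'}$. Then (Proposition~\ref{p2}) it adjoins variables $T_i,U$ over ${\cal O}_K$, sets $S_i=b_i-UT_i$ for a lifted $p$-basis $(b_i)_{i\in I}$, and adjoins all $p$-power roots of the $T_i$ and $S_i$; the resulting residue field automatically contains $F$, has $U$ as its sole $p$-basis element (since $dT_i=dS_i=0$ forces $db_i=T_i\,dU$), and injectivity of $\Omega^1_F\to\Omega^1_{F'}$ follows from $F$-linear independence of the $T_i$. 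After these two steps one applies Theorem~\ref{th1} exactly as in your Stage~2. Note also that your construction presumes $[F:F^p]<\infty$, which the theorem does not; the paper's construction works for an arbitrary index set $I$.
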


We will deduce Theorem \ref{th2}  from Theorem \ref{th1} and the following Propositions \ref{p1} and \ref{p2}. 

\begin{prop}\label{p1} Let $K$ be a complete discrete valuation field whose residue field $F$ is of characteristic $p>0$. 
Let $\pi$ be a prime element of $K$, let ${\cal O}_{K'}$ be the completion of the discrete valuation ring which is the local ring of ${\cal O}_K[T, U^{\pm 1}]/(UT^p-\pi)$ at the prime ideal generated by $T$,  let $K'$ be the field of fractions of ${\cal O}_{K'}$, and let $F'$ be the residue field of $K'$. Then we have:
\begin{enumerate}
\item[\rm (i)] The map $\Omega^1_F(\log) \to \Omega^1_{F'}(\log)$ is injective. 
\item[\rm (ii)] The image of this map is contained in $\Omega^1_{F'}$. 
\end{enumerate}
\end{prop}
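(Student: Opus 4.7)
\textbf{Structure of $\mathcal{O}_{K'}$.} I would first determine the structure of $\mathcal{O}_{K'}$. Rewriting
\[
\mathcal{O}_K[T, U^{\pm 1}]/(UT^p - \pi) = \mathcal{O}_K[U^{\pm 1}][T]/(T^p - \pi U^{-1})
\]
exhibits this as a domain, finite free of rank $p$ over $\mathcal{O}_K[U^{\pm 1}]$. The prime $(T)$ has height one and the local ring is regular, so $\mathcal{O}_{K'}$ is a complete DVR with uniformizer $T$, with relation $\pi = \bar U \cdot T^p$ in $\mathcal{O}_{K'}$ (so $e(K'/K) = p$), and with residue field $F' = F(\bar U)$, a purely transcendental extension of $F$ of transcendence degree one.

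\textbf{Proof of (ii).} To prove (ii) I would compute $d\log \pi$ in $\Omega^1_{\mathcal{O}_{K'}}(\log)$. The factorization $\pi = UT^p$ gives $d\log \pi = d\log U + p \cdot d\log T$. Tensoring with $F'$, which has characteristic $p$, kills $p \cdot d\log T$, so $d\log \pi$ maps to $d\log \bar U = \bar U^{-1}\, d\bar U$, a regular element of the image of $\Omega^1_{F'}$ in $\Omega^1_{F'}(\log)$. Since $\Omega^1_F \subset \Omega^1_F(\log)$ tautologically maps into the image of $\Omega^1_{F'}$, (ii) follows.

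\textbf{Proof of (i).} For (i) I would use the second exact sequence $\mathfrak{m}_K/\mathfrak{m}_K^2 \to \Omega^1_F \to \Omega^1_{F/\mathbb{Z}} \to 0$ to identify $\Omega^1_F/F\cdot\overline{d\pi}$ with $\Omega^1_{F/\mathbb{Z}}$. Since the log relation $\pi \cdot d\log \pi = d\pi$ forces $\overline{d\pi} = 0$ in $\Omega^1_F(\log)$, one obtains a canonical split short exact sequence
\[
0 \to \Omega^1_{F/\mathbb{Z}} \to \Omega^1_F(\log) \to F \cdot d\log \pi \to 0,
\]
and analogously for $F'$. By (ii) the injectivity in question reduces to injectivity of
\[
\Omega^1_{F/\mathbb{Z}} \oplus F \cdot d\log \pi \longrightarrow \Omega^1_{F'/\mathbb{Z}}, \qquad (\omega, c) \mapsto \omega' + c \cdot d\log \bar U.
\]
Because $F \subset F' = F(\bar U)$ is separable, the fundamental exact sequence
\[
0 \to F' \otimes_F \Omega^1_{F/\mathbb{Z}} \to \Omega^1_{F'/\mathbb{Z}} \to \Omega^1_{F'/F} \to 0
\]
is split, with $\Omega^1_{F'/F} = F' \cdot d\bar U$ one-dimensional. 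The first component $\omega \mapsto 1 \otimes \omega$ is injective by flatness of $F'/F$, and the second $c \mapsto (c/\bar U) d\bar U$ is injective since $\bar U \in (F')^\times$ and $F \hookrightarrow F'$. As a direct sum of two injections, the map is injective, proving (i).

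\textbf{Main obstacle.} The main subtlety is careful bookkeeping between the paper's $\Omega^1_F = F \otimes_{\mathcal{O}_K} \Omega^1_{\mathcal{O}_K}$ and its quotient $\Omega^1_{F/\mathbb{Z}}$, and recognizing that in the log version the relation $d\pi = \pi \cdot d\log \pi$ absorbs precisely the discrepancy $\overline{d\pi}$. Once this is in hand the result is a direct consequence of the splitting associated to the purely transcendental residue extension $F(\bar U)/F$.
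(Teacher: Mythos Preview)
Your proof is correct and is precisely the ``straightforward'' verification the paper alludes to (the paper's own proof consists of the single word ``Straightforward''). One minor notational point: in the paper $\Omega^1_F$ already denotes the absolute K\"ahler differentials $\Omega^1_{F/\mathbb{Z}}$ rather than $F\otimes_{\mathcal{O}_K}\Omega^1_{\mathcal{O}_K}$, so your intermediate passage through the conormal sequence is an extra (harmless) step; with that reading, the split exact sequence $0\to \Omega^1_F\to \Omega^1_F(\log)\to F\to 0$ is immediate and the rest of your argument goes through verbatim.
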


\begin{pf} Straightforward. 
\end{pf}

\begin{prop}\label{p2} Let $K$ be a complete discrete valuation field whose residue field $F$ is of characteristic $p>0$. Then there is an extension $K\to K'$ of  complete discrete valuation fields  satisfying the following conditions {\rm (i)}--{\rm (iii)}. Let $F'$ be the residue field of $K'$. 
\begin{enumerate}
\item[\rm (i)] $e(K'/K)=1$.
\item[\rm (ii)] $[F':(F')^p]=p$.
\item[\rm (iii)] The map $\Omega^1_F\to \Omega^1_{F'}$ is injective. 
\end{enumerate}
If $F$ is finitely generated over a perfect field $k$, we can replace {\rm (ii)} by the following  stronger condition {\rm (ii)'}.
\begin{enumerate}
\item[\rm (ii)'] There is a perfect subfield $k'$ of $F'$ such that  $F'$ is finitely generated and of transcendence degree $1$ over $k'$.
\end{enumerate}
\end{prop}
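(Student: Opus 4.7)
The plan is to construct the residue field extension $F \hookrightarrow F'$ satisfying (ii) and (iii), then lift it to $\mathcal{O}_K \hookrightarrow \mathcal{O}_{K'}$ via Cohen's structure theorem. First, fix a $p$-basis $\{b_i\}_{i \in I}$ of $F$ over $F^p$. The cases $|I| \le 1$ are disposed of directly: if $|I| = 1$ take $F' = F$, and if $F$ is perfect take $F' = F(t)$ for a new transcendental $t$; both satisfy (i)--(iii). So assume $|I| \ge 2$, and let $F_0 := \bigcap_n F^{p^n}$ denote the maximal perfect subfield. By decomposing any polynomial relation according to residues of exponents modulo $p$ and using the $p$-basis property, one shows that the $b_i$ are algebraically independent over $F_0$. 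Introduce a family $\{s_i\}_{i \in I}$ of indeterminates and let $k$ be a perfect field extending $F_0(\{s_i\})^{\mathrm{perf}}$, enlarged if necessary so that $F$ admits a field embedding into $k(b')$ below; for (ii)', when $F$ is finitely generated over a perfect subfield $k_0$, take $k = k_0(\{s_i\})^{\mathrm{perf}}$ with $I$ finite. Set $F' := k(b')$ with $b'$ transcendental over $k$; since $k$ is perfect, $[F':(F')^p] = p$ with $p$-basis $\{b'\}$, establishing (ii), and in the (ii)' setting $F' = k(b')$ is finitely generated of transcendence degree one over the perfect subfield $k' := k$.

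Next, fix distinct positive integers $\{n_i\}_{i \in I}$ with $n_i \not\equiv 0 \pmod p$ and define $\iota : F \hookrightarrow F'$ by $\iota|_{F_0}$ the inclusion $F_0 \hookrightarrow k \hookrightarrow F'$ and $\iota(b_i) := s_i + (b')^{n_i}$. To verify (iii), compute $d\iota(b_i) = n_i (b')^{n_i-1}\, db'$ in $\Omega^1_{F'/\mathbb{F}_p}$, since $ds_i = 0$ as $s_i \in k$ is a $p$-th power. A transcendence-degree count shows $b'$ is transcendental over $\iota(F)$: indeed $\iota(F)(b') \supseteq F_0(\{s_i\}, b')$, because $s_i = \iota(b_i) - (b')^{n_i}$, and this subfield has transcendence degree $|I| + 1$ over $F_0$, matching $F'$ in the relevant range. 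Given a relation $\sum_i \lambda_i\, db_i = 0$ in $\Omega^1_{F'}$ with $\lambda_i \in F$, the identity $\sum_i \iota(\lambda_i)\, n_i\, (b')^{n_i-1} = 0$ holds in the polynomial ring $\iota(F)[b']$; the monomials $(b')^{n_i-1}$ being pairwise distinct, each coefficient $\iota(\lambda_i)\, n_i$ must vanish, and since $n_i \ne 0$ in $\mathbb{F}_p$ we conclude $\lambda_i = 0$. The remaining direction $F\, d\pi \subset \Omega^1_F$ embeds faithfully into the independent summand $F'\, d\pi \subset \Omega^1_{F'}$, completing the proof of (iii).

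Finally, to lift $\iota$ to $\mathcal{O}_K \hookrightarrow \mathcal{O}_{K'}$ with $e(K'/K) = 1$: in the equal-characteristic case, $\mathcal{O}_K \cong F[[\pi]]$ by Cohen's theorem and one simply sets $\mathcal{O}_{K'} := F'[[\pi]]$ with the inclusion induced by $\iota$; in the mixed-characteristic case, one lifts $F'$ to its Cohen ring $C(F')$ and takes $\mathcal{O}_{K'}$ to be the $\pi$-adic completion of $\mathcal{O}_K \otimes_{C(F)} C(F')$. In both cases $\pi$ remains a uniformizer, so $e(K'/K) = 1$, and the residue field is $F'$. The hardest step will be extending $\iota$ from $F_0(\{b_i\})$ to all of $F$ in the general case where $F$ strictly contains $F_0(\{b_i\})$ (e.g., $F = \mathbb{F}_p(s)((t))$, where $F_0(\{s, t\}) = \mathbb{F}_p(s, t) \subsetneq F$); this requires enlarging $k$ enough to admit a full field embedding $F \hookrightarrow k(b')$ extending the prescribed values $\iota(b_i) = s_i + (b')^{n_i}$ while keeping $b'$ transcendental over the image, a compatibility that must be arranged carefully by choosing $k$ of sufficiently large transcendence degree.
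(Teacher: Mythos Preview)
Your self-identified ``hardest step''---extending $\iota$ from $F_0(\{b_i\})$ to all of $F$---is a genuine gap, and it is not merely a matter of enlarging the transcendence degree of $k$. Take $p=3$, $F=\mathbb{F}_3(x,y)[z]/(z^2-x-y)$ with the $p$-basis $\{x,y\}$. Your recipe sets $\iota(x)=s_1+(b')^{n_1}$ and $\iota(y)=s_2+(b')^{n_2}$; extending to $z$ forces $(b')^{n_1}+(b')^{n_2}+s_1+s_2$ to be a square in $k(b')$, hence in $k[b']$. For $n_1=1$, $n_2=2$ this means $(b')^2+b'+(s_1+s_2)$ has vanishing discriminant, i.e.\ $s_1+s_2=1$ in characteristic~$3$, contradicting the transcendence of the $s_i$; no enlargement of $k$ removes this algebraic obstruction, since squareness of a monic polynomial in $k[b']$ is already decided over the algebraic closure of the field generated by its coefficients. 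One can rescue this particular $F$ by switching to the $p$-basis $\{x,z\}$, but you give no mechanism for selecting a $p$-basis that avoids such obstructions in general. In the non--finitely-generated case there is a second problem: your transcendence-degree count for ``$b'$ transcendental over $\iota(F)$'' uses ${\rm trdeg}(F/F_0)=|I|$, which fails e.g.\ for $F=\mathbb{F}_p((s))((t))$, where $|I|=2$ but ${\rm trdeg}(F/F_0)$ is uncountable.

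The paper sidesteps the embedding problem entirely by building $F'$ as an extension \emph{of} $F$ rather than embedding $F$ into a prefabricated target: adjoin to $F$ new transcendentals $T_i$ and $U$, set $S_i=b_i-UT_i$, and then adjoin all $p$-power roots of the $T_i$ and $S_i$. The resulting $F'$ contains $F$ tautologically, has $p$-basis $\{U\}$, and satisfies $db_i=T_i\,dU$; linear independence of the $db_i$ over $F$ is immediate because the $T_i$ are algebraically independent over $F$ by construction. Carrying this out at the level of $\mathcal{O}_K$-algebras (a direct limit of polynomial extensions of $\mathcal{O}_K$, localised at $\mathfrak{m}_K$ and completed) simultaneously handles the lifting to $\mathcal{O}_{K'}$, so no separate Cohen-ring argument is needed. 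A minor side remark: your sentence about $F\,d\pi\subset\Omega^1_F$ is out of place, since $\pi\notin F$ and the proposition concerns $\Omega^1_F$, not $\Omega^1_F(\log)$; the $db_i$ already span $\Omega^1_F$.
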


\begin{pf} Let $(b_i)_{i\in I}$ be a lifting of a $p$-base of $F$ to ${\cal O}_K$. Let 
$A_0={\cal O}_K[T_i, U\;;\; i \in I]$ 
be the polynomial ring
and set $S_i=b_i-UT_i\in A_0$.
The residue field $F_0$ of
$A_0$ at the prime ideal $\mathfrak p_0$
generated by ${\mathfrak m}_K$
is $F_0=F(T_i, U\;;\;i\in I)$
and $(S_i, T_i,U\;;\;i\in I)$ is a
$p$-base.
For integers $n\geq0$,
writing $T_i=T_{i,0}$ and $S_i=S_{i,0}$,
define 
$A_{n+1}
=
A_n[T_{i,n+1}, S_{i, n+1}\;;\; i \in I]/
(T_{i,n+1}^p-T_{i,n}, S_{i, n+1}^p-S_{i,n}\;;\; i \in I)$ inductively
and $A=\varinjlim_nA_n$.
Then $A$ is an integral domain, the ideal $\mathfrak p$ of $A$ generated by ${\mathfrak m}_K$ is a prime ideal
and the local ring $A_{\mathfrak p}$ is a discrete valuation ring.
Hence, the residue field $F'$ of
$A$ at $\mathfrak p$ is the extension of  $F_0$ obtained by adding $T_i^{1/p^n}$ and $S_i^{1/p^n}$ for all $i,n$. Let ${\cal O}_{K'}$ be the completion of the discrete valuation
ring $A_{\mathfrak p}$ and let $K'$ be the field of fractions of ${\cal O}_{K'}$.

Then $K'$ satisfies the conditions (i)--(iii). For (i) and (ii), this is clear. We prove that (iii) is satisfied. The $F'$-vector space $\Omega^1_{F'}$ is one-dimensional with base $dU$. The $F$-vector space $\Omega^1_F$ is with base $db_i$ ($i\in I$). In $\Omega^1_{F'}$, we have  $db_i= T_idU$. Since $T_i$ ($i\in I$) are linearly independent over $F$, we have the injectivity.

Assume that $F$ is finitely generated over a perfect field $k$. Then $I$ is finite and $F$ is a finite extension of $k(b_i\;;\;i\in I)$.  Let $k'\subset F'$ be the extension of the rational function field $k(S_i,T_i\;;\;i\in I)\subset F_0=F(T_i, U\;;\;i\in I)$ given by $k':=\bigcup_{n\geq 0} k(S_i^{1/p^n}, T_i^{1/p^n})$. Then $k'$ is perfect and $F'$ is a finite extension of $k'(U)$. 
\end{pf}

\begin{para}\label{p5} We prove Theorem \ref{th2}.

Let $K_1/K$ be the extension in Proposition \ref{p1}. By taking $K_1$ as $K$ in Proposition \ref{p2}, let $K_2/K_1$ be the extension $K'/K$ of Proposition \ref{p2}. Let $K_3/K_2$ be the maximal unramified subextension of $LK_2/K_2$. Then the extension $K_3/K$ satisfies (ii) and (iii) of \ref{th2} and (ii) of Proposition \ref{p1}. 

By applying Theorem \ref{th1} to $LK_3/K_3$, we obtain a finite extension $K'/K_3$ such that $e(LK'/K')=1$
and the residue field of $K'$
is a separable extension of that of $K_3$.
 The extension  $K'/ K$ has the desired properties. If $F$ is finitely generated over a prime field $k$, the condition (ii)' is satisfied. 
 
\end{para}

\section{Review and complements on ramification groups}\label{sec:ram_groups}

We briefly recall the definition
and basic properties of ramification groups.
For more detail, we refer to
\cite{AS, AS2, Jus, Sa, Sa2}.
We introduce the refined logarithmic conductor
for a finite Galois extension
of a henselian valuation field
in (\ref{eqom}),
as a generalization of
the refined Swan conductor
of an abelian character in the case
where the extension is cyclic.
We recall the definition of the Swan conductor
of an abelian character at the end of Section \ref{s43}.
In the case where the residue field
is a function field of one variable
over a perfect field and
the ramification index of the extension is one,
we compute explicitly
the refined logarithmic conductor
in Proposition \ref{prmain}
using Lemma \ref{lmmono}.

\begin{para}
Let $K$ be a henselian discrete valuation
field and $F$ be the residue field
of the valuation ring ${\cal O}_K$.
Let $\bar K$
be a separable closure of $K$
and $G_K={\rm Gal}(\bar K/K)$ be the
absolute Galois group.
The residue field $\bar F$ of $\bar K$
is an algebraic closure of $F$.

Let $L$ be a finite \'etale $K$-algebra
and $r>0$ be a rational number.
Let ${\rm Spec}\, {\cal O}_L\to Q$
be a closed immersion
to a smooth scheme $Q$ 
over ${\cal O}_K$.
Let $K'\subset \bar K$ be a finite extension of $K$
of ramification index $e$
such that $er$ is an integer.
Then, we define
a dilatation $Q^{[er]}_{{\cal O}_{K'}}
\to Q_{{\cal O}_{K'}}
=Q\times_{{\cal O}_K}
{\cal O}_{K'}$
by blowing up the closed
subscheme
${\rm Spec}\, {\cal O}_L
\otimes_{{\cal O}_K}
{\cal O}_{K'}/{\mathfrak m}_{K'}
^{er}$
and by removing the proper transform
of the closed fiber.
After replacing $K'$ by a separable
extension if necessary,
the normalization
$Q^{(er)}_{{\cal O}_{K'}}$
of
$Q^{[er]}_{{\cal O}_{K'}}$
has geometrically reduced closed fiber
and 
the geometric closed fiber
$Q^{(r)}_{\bar F}
=
Q^{(er)}_{{\cal O}_{K'}}
\times_{{\cal O}_{K'}}\bar F$
is independent of such $K'$.

The finite set $F^r(L)
=\pi_0(Q^{(r)}_{\bar F})$
of connected components
is independent of $Q$.
If $F(L)={\rm Mor}_K(L,\bar K)$, the mapping
$F(L)\to F^r(L)$ 
induced by the canonical morphism
${\rm Spec}\, 
({\cal O}_L\times_{{\cal O}_K}
{\cal O}_{K'})^-
\to 
Q^{(er)}_{{\cal O}_{K'}}$
from the normalization is also independent of the choice
and is a surjection.
We say that the ramification of
$L$ over $K$ is bounded by $r$
if the surjection
$F(L)\to F^r(L)$ is a bijection.
The ramification group
$G_K^r\subset G_K={\rm Gal}(\bar K/K)$
is defined to be the unique closed
normal subgroup such that
the surjection
$F(L)\to F^r(L)$
induces a bijection
$F(L)/G_K^r\to F^r(L)$.
\end{para}

\begin{para}
A logarithmic variant is defined
as follows.
Let $L$ be a finite separable
extension of $K$.
Let $m$ be an integer divisible by
the ramification index $e_{L/K}$
and $\pi$ be a prime element of $K$.
We define an extension $K_m$
to be the tamely ramified extension
$K[t]/(t^m-\pi)$ if $m$ is
invertible in $F$
and to be the fraction field of the
henselization of ${\cal O}_K[u^{\pm 1},t]
/(ut^m-\pi)$ at the prime ideal $(t)$.
Then, the finite set
$F^{mr}(L\otimes_KK_m)$
is independent of such $m$
and we define $F^r_{\log}(L)$
to be
$F^{mr}(L\otimes_KK_m)$.
We say that the log ramification of
$L$ over $K$ is bounded by $r$
if the surjection
$F(L)\to F^r_{\log}(L)$ is a bijection.
The ramification group
$G_{\log, K}^r\subset G_K$
is defined to be the unique closed
normal subgroup such that
the surjection
$F(L)\to F^r_{\log}(L)$
induces a bijection
$F(L)/G_{\log, K}^r\to F^r_{\log}(L)$

Define closed normal subgroups 
$G^{r+}_K\subset G^r_K$
and
$G^{r+}_{\log, K}
\subset G^r_{\log, K}$
to be the closures of the unions
$\bigcup_{s>r}G^s_K$
and $\bigcup_{s>r}G^s_{\log, K}$
and set $F^{r+}(L)=F(L)/G_K^{r+}$
and $F^{r+}_{\log}(L)
=F(L)/G_{K,\log}^{r+}$.
We say that the ramification 
(resp.\ the log ramification) of
$L$ over $K$ is bounded by $r+$
if the surjection
$F(L)\to F^{r+}(L)$ 
(resp.\ $F(L)\to F^{r+}_{\log}(L)$) is a bijection.
\end{para}

\begin{para}\label{s43}
We call 
the largest rational number $r$ such that
the ramification 
(resp.\ the log ramification) of
$L$ over $K$ is not bounded by $r$
the conductor 
(resp.\ the logarithmic conductor) of $L$ over $K$.
The conductor 
(resp.\ the logarithmic conductor) of $L$ over $K$
is the smallest rational number $r$ such that
the ramification 
(resp.\ the log ramification) of
$L$ over $K$ is bounded by $r+$.
The conductor $c$ and
the logarithmic conductor $c_{\log}$
satisfies the inequality $c_{\log}\leqq c$.
For an extension $K'$ of a henselian discrete valuation
field $K$ of ramification index $e$,
the conductor $c'$ and
the logarithmic conductor $c'_{\log}$
of a composition field $L'=LK'$ over $K'$
satisfy $c'_{\log}\leqq e\cdot c_{\log}$
and $c'\leqq e\cdot c$.
If $L$ is the cyclic extension defined
by an abelian character $\chi$ of $G_K$,
the Swan conductor ${\Sw}(\chi)$ 
is defined as the logarithmic conductor of $L$ over $K$.
\end{para}

\begin{lem}\label{lmlog}
Assume that the
ramification index $e_{L/K}$ is $1$.
Then, for every rational number $r>0$, the canonical surjections
$F^r_{\log}(L)\to F^r(L)$
and
$F^{r+}_{\log}(L)\to F^{r+}(L)$
are bijections.
\end{lem}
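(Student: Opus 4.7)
The plan is to exploit the definition $F^r_{\log}(L)=F^{mr}(L\otimes_KK_m)$, which, as stated in the paper, is independent of the choice of integer $m$ divisible by $e_{L/K}$.

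Under the hypothesis $e_{L/K}=1$, the choice $m=1$ is admissible: it is divisible by $1$, and since $1$ is invertible in the residue field $F$, it falls under the tame case of the construction of $K_m$. The extension $K_1=K[t]/(t-\pi)$ is then canonically identified with $K$ via $t\mapsto\pi$, so $L\otimes_KK_1=L$ and $mr=r$. By the asserted independence of $m$, this yields $F^r_{\log}(L)=F^{1\cdot r}(L\otimes_KK_1)=F^r(L)$ as quotients of $F(L)$. Consequently the canonical surjection $F^r_{\log}(L)\to F^r(L)$ is the identity map, hence a bijection.

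For the $+$-version, I argue formally. By definition $F^{r+}_{\log}(L)=F(L)/G^{r+}_{\log,K}$ and $F^{r+}(L)=F(L)/G^{r+}_K$, where $G^{r+}_{\log,K}$ (resp.\ $G^{r+}_K$) is the closure of $\bigcup_{s>r}G^s_{\log,K}$ (resp.\ $\bigcup_{s>r}G^s_K$); since $F(L)$ is finite, the closure does not affect the induced orbits. Applying the first part at every rational $s>r$ shows that $G^s_{\log,K}$ and $G^s_K$ have identical orbits on $F(L)$, and taking the union over $s>r$ preserves this coincidence. Hence $G^{r+}_{\log,K}$ and $G^{r+}_K$ have identical orbits on $F(L)$, giving the bijection $F^{r+}_{\log}(L)\to F^{r+}(L)$.

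The entire substantive content of the argument is the admissibility of the trivial choice $m=1$, at which point the logarithmic and non-logarithmic constructions literally coincide on $L/K$; everything else follows formally from the definitions, so there is no serious obstacle.
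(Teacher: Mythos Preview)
Your proposal is correct and follows exactly the paper's approach: the paper's proof is the single sentence ``Since we may take $m=1$, the assertion follows,'' and you have simply spelled out what this means, together with the (entirely formal) passage to the $r+$ version.
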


\begin{pf}
Since we may take $m=1$,
the assertion follows.
\end{pf}

\begin{para}
Let $Q\to P$ be
a quasi-finite and flat morphism
of smooth schemes over ${\cal O}_K$
and let
\begin{equation}
\begin{CD}
Q@<<< {\rm Spec}\, {\cal O}_L\\
@VVV\hspace{-15mm}\square
\hspace{10mm}@VVV\\
P@<<< {\rm Spec}\, {\cal O}_K
\end{CD}
\label{eqPQ}
\end{equation}
be a cartesian diagram.
Then, by the functoriality of
the construction of dilatations,
we obtain a finite morphism
$Q^{(r)}_{\bar F}\to
P^{(r)}_{\bar F}$
of geometric closed fibers.
Define ideals
${\mathfrak m}_{\bar K}^{r+}
\subset
{\mathfrak m}_{\bar K}^r
\subset 
{\cal O}_{\bar K}$ by
$${\mathfrak m}_{\bar K}^{r+}
=\{x\in \bar K\mid v(x)>r\}
\subset
{\mathfrak m}_{\bar K}^r
=\{x\in \bar K\mid v(x)\geqq r\}.$$
For a $k$-vector space
$V$, let
${\mathbf V}(V)$ denote
the associated covariant scheme
${\rm Spec}\, S_k^\bullet V^\vee$.
Then, since 
${\rm Spec}\, {\cal O}_K\to P$ 
is a section of
a smooth morphism,
the conormal sheaf
$N_{{\rm Spec}\, {\cal O}_K/P}$
is canonically isomorphic to
the restriction of $\Omega^1_{P/{\cal O}_K}$
and hence
the geometric closed fiber
$P^{(r)}_{\bar F}$
is canonically identified
with 
${\mathbf V}
({\rm Hom}_F(\Omega^1_{P/{\cal O}_K}
\otimes_{{\cal O}_P}F,
{\mathfrak m}_{\bar K}^r/
{\mathfrak m}_{\bar K}^{r+}))$.

The fiber
$Q^{(r)}_{\bar F}\times_{P^{(r)}_{\bar F}}
0$ of the origin
is canonically identified with
the quotient $F^{r+}(L)=F(L)/G_K^{r+}$.
The ramification of
$L$ over $K$ is bounded by $r+$
if and only if the finite morphism
$Q^{(r)}_{\bar F}\to
P^{(r)}_{\bar F}$ is \'etale.
Assume that $L$ is a Galois extension
of $K$ of Galois group $G$
and fix a morphism $L\to \bar K$.
Let $r=c$ be the conductor
of $L$ over $K$.
Then, 
the connected component
$Q^{(r)\circ }_{\bar F}$
of
$Q^{(r)}_{\bar F}$ 
containing the point corresponding to
$L\to \bar K$
is a $G^r$-torsor over
$P^{(r)}_{\bar F}$.
The conductor $r=c$ of $L$ over $K$
is characterized by the condition that
the morphism $Q^{(r)\circ }_{\bar F}\to
P^{(r)}_{\bar F}$ is finite \'etale
but is not an isomorphism.
\end{para}

\begin{lem}\label{lmmono}
Assume that ${\cal O}_L$
is generated by one element $v\in {\cal O}_L$
over ${\cal O}_K$, set
$d={\rm length}_{{\cal O}_L}
\Omega^1_{{\cal O}_L/{\cal O}_K}$ and
let ${\rm ord}_L$ be the normalized valuation.
Let $v'\neq v$ be a conjugate of $v$ such that
the valuation $s={\rm ord}_L(v'-v)$ 
is the largest.
\begin{enumerate}
\item[\rm 1.]
The rational number $r=d/[L:K]+s/e_{L/K}$ 
equals the conductor of $L$ over $K$.
\item[\rm 2.]
Let $\varphi\in {\cal O}_K[X]$ be the minimal polynomial
of $v$ and
define the left vertical arrow of
the cartesian diagram
\begin{equation}
\begin{CD}
{\mathbf A}^1_{{\cal O}_K}&=
{\rm Spec}\, {\cal O}_K[V]=Q
@<<< {\rm Spec}\, {\cal O}_L
\\
@V{U\mapsto \varphi(V)}VV&
@VVV\\
{\mathbf A}^1_{{\cal O}_K}&=
{\rm Spec}\, {\cal O}_K[U]=P
@<<< {\rm Spec}\, {\cal O}_K
\end{CD}
\label{eqmono}
\end{equation}
by $\varphi$ and the bottom horizontal arrow
by $U\mapsto 0$.
Assume that $L$ is a Galois extension
of Galois group $G$
and that $G^r=\langle\sigma\rangle$ is cyclic of order $p$.
Define isomorphisms
${\mathbf F}_p\to G^r$ by
$\sigma$ and
${\mathbf A}^1_{\bar F}
\to P^{(r)}_{\bar F}=
{\mathfrak m}_{\bar K}^r/
{\mathfrak m}_{\bar K}^{r+}$
by $\varphi'(v)(v-\sigma(v))$.
Then, there is an isomorphism
\begin{equation}
\begin{CD}
0@>>>
{\mathbf F}_p
@>>>
{\mathbf A}^1_{\bar F}
@>{T\mapsto T^p-T}>>
{\mathbf A}^1_{\bar F}
@>>>0\\
@.@VVV@VVV@VVV@.
\\
0@>>>
G^r
@>>>
Q^{(r)\circ }_{\bar F}
@>>>
P^{(r)}_{\bar F}
@>>>
0
\end{CD}
\label{eqGr}
\end{equation}
of extensions of smooth group schemes
by \'etale group schemes.
\end{enumerate}
\end{lem}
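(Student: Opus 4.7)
The plan is to unpack the dilatation $Q^{(r)}_{\bar F}$ explicitly using the cartesian diagram (\ref{eqmono}) together with the factorization $\varphi(v + W) = W \cdot \prod_{v' \neq v}(W + v - v')$, where $v'$ runs over the conjugates of $v$ other than $v$ itself.

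For Part 1, first I would pass to a finite extension $K' \supseteq L$ with ramification index $e = e_{K'/K}$ large enough that $er \in \mathbf{Z}$, and describe $Q^{[er]}_{\mathcal{O}_{K'}}$ as the chart $s = \varphi(V)/\pi'^{er}$ of the blowup of the ideal $(\varphi(V),\pi'^{er})$ in $\mathbf{A}^1_{\mathcal{O}_{K'}}$. Substituting $W = V - v$ rewrites the defining relation as $W \cdot \prod_{v' \neq v}(W + v - v') = \pi'^{er}s$. Next I would normalize by introducing a rescaled coordinate $T = W/\pi'^{e\rho}$, choosing $\rho$ so that the resulting closed fiber of $Q^{(er)}_{\mathcal{O}_{K'}}$ over $\bar F$ is geometrically reduced and the defining equation exhibits $|F(L)|$ connected components precisely when $r$ crosses the stated threshold. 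Tracking the valuations $v_{\bar K}(\varphi'(v)) = d/e_{L/K}$ and $\max_{v' \neq v} v_{\bar K}(v - v') = s/e_{L/K}$ through the factors of the product expansion would pin down the critical value of $r$ appearing in the statement.

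For Part 2, the Galois hypothesis together with $G^r = \langle\sigma\rangle \cong \mathbf{F}_p$ implies that $Q^{(r)\circ}_{\bar F} \to P^{(r)}_{\bar F} = \mathbf{A}^1_{\bar F}$ is an $\mathbf{F}_p$-torsor over the affine line, hence an Artin--Schreier cover in suitable coordinates. I would verify directly that under the identifications prescribed in the statement---namely $\mathbf{F}_p \to G^r$ by $1 \mapsto \sigma$ and $\mathbf{A}^1_{\bar F} \to P^{(r)}_{\bar F}$ by multiplication by $\varphi'(v)(v - \sigma(v))$---this cover coincides with the standard $T \mapsto T^p - T$. The key ingredient is the Artin--Schreier identity $\prod_{i \in \mathbf{F}_p}(T - i) = T^p - T$: since $G^r$ acts transitively at the critical level, the conjugates $\sigma^i(v)$ for $i \in \mathbf{F}_p^\times$ all satisfy $v_{\bar K}(v - \sigma^i(v)) = s/e_{L/K}$, so the product $\prod_{v'\in G^r\cdot v \setminus\{v\}}(W + v - v')$, once rescaled by $\pi'^{e\rho}$ and by $v - \sigma(v)$, becomes exactly $\prod_{i \in \mathbf{F}_p^\times}(T - i)$, making the total polynomial map $W \mapsto W\cdot \prod(W + v - v')$ reduce to $T \cdot \prod_{i \neq 0}(T - i) = T^p - T$ after the appropriate scaling on the target.

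The main obstacle will be the careful bookkeeping in the normalization step: determining the correct exponent $\rho$ so that $T$ is integral over the ring under consideration, keeping track of the various unit factors $u_{v'} = (v - v')/\pi'^{e \cdot v_{\bar K}(v-v')}$ that arise, and showing that under the identifications of the statement these units conspire to give the clean Artin--Schreier form. Once this bookkeeping is done, both the conductor formula and the diagram (\ref{eqGr}) follow by direct comparison; it is the scaling by $\varphi'(v)(v - \sigma(v))$ in the identification $\mathbf{A}^1_{\bar F} \to P^{(r)}_{\bar F}$ that is precisely engineered to absorb these unit factors.
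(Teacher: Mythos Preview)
Your proposal is correct and follows essentially the same route as the paper: factor $\varphi(X)=\prod_i(X-v_i)$, rescale the coordinate near $v$, and reduce the product over the $G^r$-orbit to the Artin--Schreier polynomial $T^p-T$ while the remaining factors contribute $\varphi'(v)$. The one simplification in the paper's version is that instead of rescaling by a power $\pi'^{e\rho}$ of a uniformizer and then chasing the residual units $u_{v'}$, it sets $X-v=(v-\sigma(v))T$ directly; with this choice the factors for the $G^r$-orbit become $\prod_{i=0}^{p-1}(T-i)=T^p-T$ on the nose and the bookkeeping you anticipate as the ``main obstacle'' disappears, after which Part~1 is read off from the characterization of the conductor rather than argued separately.
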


The proof is similar to the computation
in \cite[Example 3.3.3]{Sa2}.

\begin{pf}
The left vertical arrow
$Q\to P$ in (\ref{eqmono})
is finite flat.
Let $v_1,\ldots,v_n\in L$ be the conjugates
of $v$. We fix a numbering so that 
$v_n=v, v_{n-1}=\sigma(v)$
and ${\rm ord}_L(v_i-v_n)$ is increasing.
Setting $X-v_n=(v_{n-1}-v_n)T$,
we have
$\varphi(X)=\prod_{i=1}^n(X-v_i)=
\prod_{i=1}^n(v_n-v_i+(v_{n-1}-v_n)T)$.
By the assumption that
$G^r$ is cyclic of order $p$,
we have ${\rm ord}_L(v_i-v_n)
<{\rm ord}_L(v_{n-1}-v_n)$ for
$i\leqq n-p$
and we may assume that
$(v_{n-i}-v_n)/(v_{n-1}-v_n)\equiv i
\bmod {\mathfrak m}_L$
for $i=0,\ldots,p-1$.
Hence, we have
\begin{equation}
\varphi(X)\equiv
\prod_{i=1}^{n-1}
(v_n-v_i)
\cdot
\prod_{i=1}^{p-1}(1+iT)
\cdot (v_{n-1}-v_n)T
=
\varphi'(v)(v-\sigma(v))(T^p-T)
\label{eqphi}
\end{equation}
$\bmod\ {\mathfrak m}_L^{r+1}$.
Thus the assertion 1 follows from
the characterization of the conductor
at the end of 4.5.
The assertion 2 also follows from (\ref{eqphi}).
\end{pf}

\begin{lem}\label{lmN}
Let $P_1,P_2,Q_1,Q_2$ be smooth schemes
over ${\cal O}_K$
and 
\begin{equation}
\begin{CD}
Q_i@<<< {\rm Spec}\, {\cal O}_L\\
@V{f_i}VV \hspace{-12mm}\square
\hspace{12mm}@VVV\\
P_i@<<< {\rm Spec}\, {\cal O}_K
\end{CD}
\label{eqPQi}
\end{equation}
for $i=1,2$ be cartesian diagrams
of schemes over ${\cal O}_K$
such that the vertical arrows are
quasi-finite and flat.
Let 
\begin{equation}
\begin{CD}
Q_1\times_{{\cal O}_K}F
@<<<
Q_2\times_{{\cal O}_K}F
@<<< {\rm Spec}\, {\cal O}_L
\otimes_{{\cal O}_K}F\\
@V{\bar f_1}VV@V{\bar f_2}VV @VVV\\
P_1\times_{{\cal O}_K}F
@<<<
P_2\times_{{\cal O}_K}F
@<<< {\rm Spec}\, F
\end{CD}
\label{eqQ1Q2}
\end{equation}
be a commutative diagram where the right square is induced by
{\rm (\ref{eqPQi})}.
Then for a rational number $r>0$,
the diagram {\rm (\ref{eqQ1Q2})}
induces a commutative diagram
\begin{equation}
\begin{CD}
Q^{(r)}_{1, \bar F}
@<<<
Q^{(r)}_{2, \bar F}
\\
@V{\bar f_1^{(r)}}VV@VV{\bar f_2^{(r)}}V\\
P^{(r)}_{1, \bar F}
@<<<
P^{(r)}_{2, \bar F}.
\end{CD}
\label{eqQ1Q2b}
\end{equation}
\end{lem}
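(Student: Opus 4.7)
My plan is to produce the vertical arrows of (\ref{eqQ1Q2b}) by first lifting the closed-fiber morphisms $\bar f_1, \bar f_2$ of (\ref{eqQ1Q2}) to $\mathcal{O}_K$-morphisms $\tilde g\colon P_2\to P_1$ and $\tilde f\colon Q_2\to Q_1$ (at least on sufficiently large infinitesimal neighborhoods of the closed immersions from $\Spec\,\mathcal{O}_K$ and $\Spec\,\mathcal{O}_L$), and then appealing to the evident $\mathcal{O}_K$-functoriality of the dilatation construction. Indeed, any such $\tilde f$ compatible with the closed immersions from $\Spec\,\mathcal{O}_L$ sends the ideal defining $\Spec\,\mathcal{O}_L\otimes_{\mathcal{O}_K}\mathcal{O}_{K'}/\mathfrak{m}_{K'}^{er}$ in $Q_{2,\mathcal{O}_{K'}}$ into the corresponding ideal in $Q_{1,\mathcal{O}_{K'}}$, so the universal property of blow-ups yields $Q_2^{[er]}_{\mathcal{O}_{K'}}\to Q_1^{[er]}_{\mathcal{O}_{K'}}$; normalization and passage to the geometric closed fiber are likewise functorial, producing the right vertical arrow of (\ref{eqQ1Q2b}). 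The same recipe applied to $\tilde g$ gives the left vertical arrow, and if the lifts are chosen compatibly with $Q_i\to P_i$ the whole square commutes.

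To produce such lifts I would use the smoothness of $P_1$ and $Q_1$ over $\mathcal{O}_K$. Since the closed-fiber morphisms $\bar g$ and $\bar f$ extend the common restrictions in the right column of (\ref{eqQ1Q2}), the infinitesimal lifting property gives, inductively in $n$, lifts from $P_2\otimes_{\mathcal{O}_K}\mathcal{O}_K/\pi^n$ (resp.\ $Q_2\otimes_{\mathcal{O}_K}\mathcal{O}_K/\pi^n$) to $P_1$ (resp.\ $Q_1$); taking $n$ large enough that $\pi^n\in \mathfrak{m}_{K'}^{er}$ is enough since the blow-up at level $r$ only depends on this thickening. One lifts first on the $P$-side and then, using the flatness of $Q_i\to P_i$ to identify $Q_i$ locally with a pullback from $P_i$, transports the $P$-lift to a compatible lift on the $Q$-side, obtaining the required commutative square of $\mathcal{O}_K$-lifts.

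The main obstacle is verifying that the resulting square (\ref{eqQ1Q2b}) is independent of the chosen lifts, hence depends only on the closed-fiber data (\ref{eqQ1Q2}). Two lifts $\tilde f, \tilde f'$ of the same $\bar f$ agree on $\Spec\,\mathcal{O}_L$ (resp.\ $\Spec\,\mathcal{O}_K$), so their difference is a derivation valued in the defining ideal $\mathcal{I}_L$ (resp.\ $\mathcal{I}_{\mathcal{O}_K}$). On the $P$-side the identification $P^{(r)}_{\bar F}\simeq\mathbf{V}(\Hom_F(\Omega^1_{P/\mathcal{O}_K}\otimes F,\mathfrak{m}_{\bar K}^r/\mathfrak{m}_{\bar K}^{r+}))$ recalled in Section \ref{sec:ram_groups} already exhibits the induced arrow purely in terms of the closed-fiber pull-back of K\"ahler differentials, so its independence of the lift is automatic. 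On the $Q$-side, the difference of two lifts lies in $\mathcal{I}_L$, and a local calculation on the blow-up shows that after dividing by the equation of the exceptional divisor, normalizing, and restricting to the geometric closed fiber, the two induced morphisms $Q_2^{(r)}_{\bar F}\to Q_1^{(r)}_{\bar F}$ coincide; alternatively one can deduce independence on $Q$ from independence on $P$ together with the étaleness of $Q_i^{(r)}_{\bar F}\to P_i^{(r)}_{\bar F}$ over the relevant connected components, which pins down the lift once the base lift is fixed.
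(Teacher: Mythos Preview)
Your overall strategy---lift the closed-fiber morphisms using smoothness of the targets and then invoke functoriality of the dilatation construction---is the paper's as well. The executions differ in one point, and yours has a gap there.

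The paper lifts \emph{only on the $Q$-side}. One glues the given map $Q_2\times_{\mathcal{O}_K}F\to Q_1\times_{\mathcal{O}_K}F$ with the identity of $\Spec\mathcal{O}_L$ along their common restriction to obtain a morphism from the reduced closed subscheme $Q'_2=(Q_2\times_{\mathcal{O}_K}F)\cup\Spec\mathcal{O}_L\subset Q_2$ to $Q_1$, and then lifts it (after replacing $Q_2$ by an \'etale neighborhood of $\Spec\mathcal{O}_L$) to a morphism $Q_2\to Q_1$ over $\mathcal{O}_K$ using smoothness of $Q_1$. No lift on the $P$-side is ever made. Instead, the cartesian squares (\ref{eqPQi}) give canonical isomorphisms $N_{\Spec\mathcal{O}_L/Q_i}\cong N_{\Spec\mathcal{O}_K/P_i}\otimes_{\mathcal{O}_K}\mathcal{O}_L$, so in the intermediate diagram whose bottom row has entries $\mathbf{V}\bigl(\Hom_F(N_{\Spec\mathcal{O}_L/Q_i}\otimes_{\mathcal{O}_L}\bar F,\ \mathfrak m_{\bar K}^r/\mathfrak m_{\bar K}^{r+})\bigr)$ one may replace these by $P^{(r)}_{i,\bar F}$; the resulting bottom arrow then depends only on the closed-fiber data in (\ref{eqQ1Q2}), with no lift-independence argument needed.

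Your plan, by contrast, is to lift first on $P$ and then ``transport the $P$-lift to a compatible lift on the $Q$-side'' using flatness of $f_i$. This step is the gap: a quasi-finite flat morphism between smooth $\mathcal{O}_K$-schemes need not be \'etale nor admit any local pull-back description---the basic example $\mathbf{A}^1_{\mathcal{O}_K}\to\mathbf{A}^1_{\mathcal{O}_K}$ by the minimal polynomial $\varphi$ in Lemma~\ref{lmmono} is ramified---so there is no evident way to promote a chosen lift $P_2\to P_1$ to a lift $Q_2\to Q_1$ making the square with $f_1,f_2$ commute. Your fallback independence argument, invoking \'etaleness of $Q^{(r)}_{i,\bar F}\to P^{(r)}_{i,\bar F}$, is likewise unavailable in general: that morphism is \'etale exactly when the ramification of $L/K$ is bounded by $r+$, which is not assumed here. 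The paper sidesteps both issues by never asking for a compatible pair of lifts and by extracting the $P$-arrow from the conormal identification; note also that the paper's lift is taken from the union $(Q_2\times_{\mathcal{O}_K}F)\cup\Spec\mathcal{O}_L$ rather than from a thickening $Q_2\otimes\mathcal{O}_K/\pi^n$, which is what guarantees compatibility with the closed immersion of $\Spec\mathcal{O}_L$ without extra work.
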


\begin{pf}
For $i=1,2$,
we consider the unions
$Q_i'=(Q_i\times_{{\cal O}_K}F)
\cup {\rm Spec}\, {\cal O}_L
\subset Q_i$
as reduced closed subschemes.
Then by the commutative diagram
(\ref{eqQ1Q2}), the morphism 
$Q_1\times_{{\cal O}_K}F
\to
Q_2\times_{{\cal O}_K}F$
and the identity of
${\rm Spec}\, {\cal O}_L$
define a morphism
$Q'_1\gets Q'_2$.
Since $Q_1$ is smooth over ${\cal O}_K$,
after replacing $Q_2$ by an \'etale neighborhood
of ${\rm Spec}\, {\cal O}_L$ if necessary,
we may lift 
$Q'_1\gets Q'_2$ to a morphism
$Q_1\gets Q_2$ over ${\cal O}_K$.

The morphism
$Q_1\gets Q_2$
induces a morphism of conormal modules
$N_{{\rm Spec}\, {\cal O}_L/Q_1}
\to 
N_{{\rm Spec}\, {\cal O}_L/Q_2}$
and defines a commutative diagram
\begin{equation}
\begin{CD}
Q^{(r)}_{1, \bar F}
@<<<
Q^{(r)}_{2, \bar F}
\\
@VVV@VVV\\
{\mathbf V}(
{\rm Hom}_F(N_{{\rm Spec}\, {\cal O}_L/Q_1}
\otimes_{{\cal O}_L}\bar F,
{\mathfrak m}_{\bar K}^r/
{\mathfrak m}_{\bar K}^{r+})
)
@<<<
{\mathbf V}(
{\rm Hom}_F(N_{{\rm Spec}\, {\cal O}_L/Q_2}
\otimes_{{\cal O}_L}\bar F,
{\mathfrak m}_{\bar K}^r/
{\mathfrak m}_{\bar K}^{r+})
).
\end{CD}
\label{eqQ1Q2N}
\end{equation}
By the cartesian diagram (\ref{eqPQi}),
the conormal modules
$N_{{\rm Spec}\, {\cal O}_L/Q_i}$
are the tensor products
$N_{{\rm Spec}\, {\cal O}_K/P_i}
\otimes_{{\cal O}_K}{\cal O}_L$
for $i=1,2$.
Hence by the commutative diagram
(\ref{eqQ1Q2}),
we may replace
${\mathbf V}(
{\rm Hom}_F(N_{{\rm Spec}\, {\cal O}_L/Q_i}
\otimes_{{\cal O}_L}\bar F,
{\mathfrak m}_{\bar K}^r/
{\mathfrak m}_{\bar K}^{r+})
)
$
by $P^{(r)}_{i, \bar F}$
to get (\ref{eqQ1Q2b}).
\end{pf}

By slightly enlarging the terminology,
we say that the henselization of
a local ring of a scheme of finite
type is essentially of finite type.

\begin{lem}[cf. {\cite[Lemma 4.4, 4.5]{AS2}}]
\label{lmPQ}
Let ${\cal O}_K$ be a
henselian discrete valuation ring
essentially of finite type and flat
over $W=W(k)$
for a perfect field $k$ of characteristic $p>0$.
\begin{enumerate}
\item[\rm 1.]
There exist
a smooth scheme $P_0$
over $W$,
a divisor $D_0\subset P_0$
smooth over $W$,
a divisor $X_0\subset P_0$
flat over over $W$
meeting $D_0$ transversely 
and an isomorphism
${\cal O}^h_{X_0,\xi}\to {\cal O}_K$ over $W$
from the henselization 
of the local ring at
a generic point $\xi$ of the intersection
$X_0\cap D_0$.
\item[\rm 2.]
Let $L$ be a finite separable extension of
$K$ of ramification index $e$.
Let $P_0,D_0,X_0$ be as in $1$.
Further let $Q_0$ be
a smooth scheme over $W$,
$E_0\subset Q_0$ a smooth divisor over $W$,
$Y_0\subset Q_0$ a divisor flat over $W$
meeting $E_0$ transversely 
and ${\cal O}^h_{Y_0,\eta}\to {\cal O}_L$
an isomorphism  over $W$.
Then, after replacing $X_0$ and $Y_0$
by \'etale neighborhoods of
${\rm Spec}\,{\cal O}_K$ and
${\rm Spec}\,{\cal O}_L$,
there exists a cartesian diagram
\begin{equation}
\begin{CD}
e\cdot E_0@>>> Q_0@<<< Y_0
@<<< {\rm Spec}\,{\cal O}_L\\
@VVV@VfVV\hspace{-10mm}\square
\hspace{6mm} @VVV
\hspace{-15mm}\square
\hspace{11mm} @VVV\\
D_0@>>> P_0@<<< X_0@<<<
{\rm Spec}\,{\cal O}_K
\end{CD}
\label{eqPQ0}
\end{equation}
such that the vertical arrows are finite flat.
\end{enumerate}
\end{lem}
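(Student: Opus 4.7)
My plan is to construct the geometric realizations directly, closely following the strategy of \cite[Lemma 4.4, 4.5]{AS2}. For part 1, I would start by using the essentially-of-finite-type hypothesis to realize ${\cal O}_K$ as the henselization ${\cal O}^h_{X,\xi}$ of an integral affine $W$-scheme $X$ of finite type at a codimension-one point $\xi$; passing to a horizontal component through $\xi$ arranges $X$ to be $W$-flat. Since ${\cal O}_{X,\xi}$ is a DVR, $X$ is regular at $\xi$, so after choosing a closed embedding $X \hookrightarrow \mathbb{A}^N_W$ and shrinking around $\xi$, the scheme $X$ is a complete intersection cut out by a regular sequence of length $N - \dim X$. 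Dropping one equation of this regular sequence yields a smooth $W$-subscheme $P_0 \subset \mathbb{A}^N_W$ of dimension $\dim X + 1$ containing $X_0 := X$ as a divisor. A uniformizer $\pi$ of ${\cal O}_K$ lifts to a function $\tilde\pi$ on $P_0$; after further shrinking, $\tilde\pi$ is part of a regular system of parameters at $\xi$, so $D_0 := V(\tilde\pi)$ is smooth over $W$ and meets $X_0$ transversely at $\xi$. The intersection $X_0 \cap D_0$ is locally the closure of $\xi$ in $X_0$, identifying $\xi$ as a generic point, and ${\cal O}^h_{X_0, \xi} = {\cal O}_K$ by construction.

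For part 2, I would first spread out the finite flat morphism ${\rm Spec}\, {\cal O}_L \to {\rm Spec}\, {\cal O}_K$ to a finite flat morphism $Y_0 \to X_0$, after replacing $Y_0, X_0$ by étale neighborhoods of their given closed points; this uses that ${\cal O}_K, {\cal O}_L$ are filtered colimits of étale-local rings. To construct $f \colon Q_0 \to P_0$, I would lift coordinate functions of $P_0$: each such function pulls back via ${\rm Spec}\, {\cal O}_L \to {\rm Spec}\, {\cal O}_K \hookrightarrow X_0 \hookrightarrow P_0$ to a function on a formal neighborhood of ${\rm Spec}\, {\cal O}_L$, and by smoothness of $Q_0$ over $W$ each extends to a function on a Zariski neighborhood of $Y_0$ in $Q_0$. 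The distinguished function $\tilde\pi$ restricts on $Y_0$ to $u^e \cdot (\text{unit})$, where $u$ defines $E_0 \cap Y_0$; extending $\tilde\pi$ to $Q_0$ in this form ensures $f^*(D_0) = e \cdot E_0$ as divisors. Finite flatness of $f$, and the cartesian property of both squares, then follow from the finite flatness of $Y_0 \to X_0$ combined with the local description at $\xi$, possibly after a final shrinking to the flat locus.

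The hardest step is the simultaneous realization of regularity conditions in part 1: making $P_0$ smooth over $W$, $X_0$ a divisor in $P_0$, and $\tilde\pi$ cutting out a smooth $D_0$ transverse to $X_0$. Each condition amounts to a linear-algebra choice in the Zariski cotangent space at $\xi$ (which element of a regular sequence to drop, and how to lift the uniformizer to an element complementary to the tangent space of $X_0$), but coordinating them globally may require additional étale localizations. Once the local picture at $\xi$ is arranged, the required properties (flatness, smoothness, transversality, finite flatness of $f$) hold on a sufficiently small open neighborhood by semi-continuity, giving the desired data.
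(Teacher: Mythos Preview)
Your approach differs from the paper's in Part~1, and the difference exposes a gap in your argument. The paper does not embed $X$ in a large $\mathbb{A}^N_W$ and drop an equation. Instead it chooses liftings $u_1,\ldots,u_n\in{\cal O}_K$ of a separating transcendence basis of $F$ over $k$ together with a uniformizer $\pi$, observes that the resulting morphism ${\rm Spec}\,{\cal O}_K\to\mathbb{A}^{n+1}_W$ is \emph{formally unramified}, and then invokes the local structure of unramified morphisms to factor it (after passing to an \'etale neighborhood) as a closed immersion $X_0\hookrightarrow P_0$ with $P_0$ \'etale over $\mathbb{A}^{n+1}_W$. Smoothness of $P_0$ over $W$ and of $D_0:=\{t=0\}$ over $W$ are then inherited from $\mathbb{A}^{n+1}_W$ and $\mathbb{A}^n_W$ for free; no delicate choices are needed.

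In your approach the step ``dropping one equation of this regular sequence yields a smooth $W$-subscheme $P_0$'' is not justified, and your later remark that the choice is ``linear algebra in the Zariski cotangent space at $\xi$'' conflates regularity with smoothness over $W$. Linear independence of $f_1,\ldots,f_{c-1}$ in $\mathfrak{m}_\xi/\mathfrak{m}_\xi^2$ (the cotangent space of $\mathbb{A}^N_W$ at $\xi$) gives only that $P_0$ is regular at $\xi$; smoothness over $W$ requires linear independence of the reductions $\bar f_1,\ldots,\bar f_{c-1}$ in the cotangent space of the special fiber $\mathbb{A}^N_k$ at the image of $\xi$, which is a genuinely different condition when $e_K>1$ (the class of $p$ lies in the kernel of the comparison map). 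Your approach can be repaired by making this choice carefully in the fiber cotangent space, but once you do so you are essentially reproducing the paper's unramifiedness argument by hand. The paper's route is both shorter and conceptually cleaner: it builds in the relative smoothness from the outset.

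For Part~2 your outline is close in spirit to the paper's, but the paper again exploits the \'etale map $P_0\to\mathbb{A}^{n+1}_W$: it first defines $Q_0\to\mathbb{A}^{n+1}_W$ by sending $t\mapsto \tilde v s^e$ (where $s$ cuts out $E_0$ and $\tilde v$ lifts the unit relating $\pi$ to $\pi'^e$), then lifts through the \'etale $P_0\to\mathbb{A}^{n+1}_W$ to obtain $f\colon Q_0\to P_0$ with $f^*D_0=e\cdot E_0$ automatically. The cartesian property of the two squares is then deduced not from ``the local description at $\xi$'' but by a dimension/degree count: one first arranges $E_0\to D_0$ (hence $Q_0\to P_0$) quasi-finite, then finite flat with the right square cartesian, and finally observes that the degree $[L:K]$ forces the middle square to be cartesian as well.
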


\begin{pf}
1.
Let $u_1,\ldots,u_n
\in {\cal O}_K$ be liftings of
a transcendental
basis $\bar u_1,\ldots,\bar u_n
\in F$ over $k$
such that $F$ is a finite
separable extension of $k(\bar u_1,\ldots,\bar u_n)$
and $\pi$ be a prime element of $K$.
If we set ${\mathbf A}^{n+1}_{W}={\rm Spec}\, W
[u_1,\ldots,u_n,t]$, the morphism
${\cal O}_K\to {\mathbf A}^{n+1}_{W}$ defined by
$u_1,\ldots,u_n,\pi\in  {\cal O}_K$ is
formally unramified.
Hence, there exists
an \'etale neighborhood
$P_0\to {\mathbf A}^{n+1}_{W}$
of the image $\xi$ of
the closed point
of ${\rm Spec}\, {\cal O}_K$,
a regular divisor
$X_0\subset P_0$
and an isomorphism
${\cal O}^h_{X_0,\xi}\to {\cal O}_K$.
It suffices to define $D_0$ by $t$.

2.
Take a function on $P_0$
defining $D_0$ and take
an \'etale morphism
$P_0\to  {\mathbf A}^{n+1}_{W}={\rm Spec}\, W
[u_1,\ldots,u_n,t]$
such that $D_0\subset P_0$
is defined by $t$.
Let $\pi\in {\cal O}_K$
be the image of $t$.

Let $s$ be a function on $Q_0$
defining $E_0$
and let $\pi'\in {\cal O}_L$
be the image of $s$.
Define $v\in {\cal O}_L^\times$
by $\pi=v\pi^{\prime e}$
and lift it to a unit $\tilde v$ on $Q_0$.
We define a morphism
$Q_0\to {\mathbf A}^{n+1}_{W}$ satisfying $t\mapsto 
\tilde v s^e$ and lifting the composition
${\rm Spec}\, {\cal O}_L
\to {\rm Spec}\, {\cal O}_K\to
{\mathbf A}^{n+1}_{W}$.
By replacing $Q_0$ by an \'etale
neighborhood,
we may lift $Q_0\to 
{\mathbf A}^{n+1}_{W}$
to $f\colon Q_0\to P_0$
satisfying $f^*D_0=e\cdot E_0$.

We show that the middle and the right squares are
cartesian after replacing
$Q_0$ and $P_0$ by \'etale neighborhoods.
Since the residue fields $F$
and $E$ of $K$ and $L$
are the function field
of the closed fibers
$D_{0,k}$ and $E_{0,k}$,
we may assume
$E_0\to D_0$
and hence
$Q_0\to P_0$
are quasi-finite and hence
flat.
Further, we may assume that
$Q_0\to P_0$ is finite flat
and the right square is cartesian.
Then, the morphism
$Q_0\to P_0$ is of degree $[L:K]$
and hence the middle square is cartesian.
\end{pf}

\begin{para}
Assume that ${\cal O}_K$
is essentially of finite type and flat
over $W=W(k)$
for a perfect field $k$ of characteristic $p>0$
and let the notation be as in
Lemma \ref{lmPQ}.2.
We define a dilatation
$$P^\sim=
(P_0\times_{W}{\cal O}_K)^\sim
\to P=P_0\times_{W}{\cal O}_K$$
by blowing-up
$D_0\times_WF
=
(D_0\times_W{\cal O}_K)
\cap
(P_0\times_WF)
\subset
P_0\times_W{\cal O}_K$
and by removing the proper
transforms of
$D_0\times_W{\cal O}_K$
and of $P_0\times_WF$.
We consider a cartesian diagram
\begin{equation}
\begin{CD}
Q_0@<<< Q@<<< 
Q^\sim @<<< {\rm Spec}\, {\cal O}_L\\
@VVV
\hspace{-10mm}\square\hspace{5mm}
@VVV
\hspace{-10mm}\square\hspace{5mm}
@VVV 
\hspace{-15mm}\square\hspace{10mm}
@VVV\\
P_0@<<< P@<<< 
P^\sim @<<< {\rm Spec}\, {\cal O}_K.
\end{CD}
\label{eqPQQ}
\end{equation}

We consider ${\cal O}_K$
as a log scheme
with the log structure defined
by the closed point.
With respect to the log structure
of $Q^\sim$ defined by the pull-back
of $E_0$,
the log scheme $Q^\sim$
is log smooth over ${\cal O}_K$.
Let $K'$ be a finite separable extension
such that
the ramification index $e_{K'/K}$
is divisible by $e=e_{L/K}$.
Then, the log product
$Q^\sim_{{\cal O}_{K'}}=
Q^\sim\times^{\log}_{{\cal O}_K}
{\cal O}_{K'}$ is classically smooth
over ${\cal O}_{K'}$
and we have a cartesian diagram
\begin{equation}
\begin{CD}
Q^\sim_{{\cal O}_{K'}}
@<<< {\rm Spec}\, 
({\cal O}_L\times^{\log}_{{\cal O}_K}
{\cal O}_{K'})\\
@VVV 
\hspace{-25mm}\square\hspace{15mm}
@VVV\\
P^\sim_{{\cal O}_{K'}}
@<<< {\rm Spec}\, {\cal O}_{K'}.
\end{CD}
\label{eqPQK'}
\end{equation}

By the cartesian diagram
(\ref{eqPQQ}),
the conormal modules
$N_{{\rm Spec}\, {\cal O}_L/Q}$
and 
$N_{{\rm Spec}\, {\cal O}_L/Q^\sim}$
are the pull-backs of
$N_{{\rm Spec}\, {\cal O}_K/P}
=\Omega^1_{{P_0}/W}\otimes_{{\cal O}_{P_0}}
{\cal O}_K$
and 
$N_{{\rm Spec}\, {\cal O}_K/P^\sim}
=\Omega^1_{{P_0}/W}(\log D_0)
\otimes_{{\cal O}_{P_0}}
{\cal O}_K$.
We have an exact sequence
$0\to N_{D_0/P_0}
\to 
\Omega^1_{{P_0}/W}\otimes_{{\cal O}_{P_0}}
{\cal O}_{D_0}
\to
\Omega^1_{{D_0}/W}\to0$
and a commutative diagram
$$\begin{CD}
\Omega^1_{{D_0}/W}\otimes_{{\cal O}_{D_0}}
F@>>> \Omega^1_F\\
@VVV@VVV\\
\Omega^1_{{P_0}/W}(\log D_0)
\otimes_{{\cal O}_{D_0}}
F@>>> \Omega^1_F(\log)
\end{CD}$$
where the horizontal arrows are canonical isomorphisms.
Hence the diagram
(\ref{eqPQQ}) and (\ref{eqPQK'})
define a commutative diagram
\begin{equation}
\begin{CD}
Q^{(r)}_{\bar F}@<<< Q^{\sim (r)}_{\bar F} 
\\
@VVV @VVV\\
P^{(r)}_{\bar F}@<<< P^{\sim (r)}_{\bar F}
\end{CD}
\label{eqPQr}
\end{equation}
of the reduced closed fibers of dilatations.
The bottom arrow
is the linear mapping
\begin{align}
P^{(r)}_{\bar F}
=
{\mathbf V}
({\rm Hom}_F
(\Omega^1_{{P_0}/W}
\otimes_{{\cal O}_{D_0}}
&F,
{\mathfrak m}_{\bar K}^r/
{\mathfrak m}_{\bar K}^{r+}))\\
&\gets 
P^{\sim (r)}_{\bar F}
=
{\mathbf V}
({\rm Hom}_F
(\Omega^1_F(\log),
{\mathfrak m}_{\bar K}^r/
{\mathfrak m}_{\bar K}^{r+}))
\nonumber
\end{align}
of $\bar F$-vector spaces
and its image is
${\mathbf V}
({\rm Hom}_F
(\Omega^1_F,
{\mathfrak m}_{\bar K}^r/
{\mathfrak m}_{\bar K}^{r+}))
.$

Assume that $L$ is a Galois
extension of $K$ of Galois group $G$
and let $r=c_{\log}>0$ be the logarithmic conductor of $L$
 over $K$.
We fix a morphism $L\to \bar K$ over $K$.
By \cite[Theorem 2]{Sa},
the right vertical arrow of (\ref{eqPQr})
restricted to the connected
component $Q^{\sim (r)\circ}_{\bar F}
\subset Q^{\sim (r)}_{\bar F}$
containing the point corresponding to
$L\to \bar K$
defines an extension 
\begin{equation}
\begin{CD}
0@>>> G_{\log}^r
@>>>
Q^{\sim (r)\circ}_{\bar F}
@>>>
P^{\sim (r)}_{\bar F}
=
{\mathbf V}
({\rm Hom}_F
(\Omega^1_F(\log),
{\mathfrak m}_{\bar K}^r/
{\mathfrak m}_{\bar K}^{r+}))
@>>> 0
\end{CD}
\label{eqExt}
\end{equation}
of an $\bar F$-vector space
by an ${\mathbf F}_p$-vector space.
By \cite[Proposition 1.20]{WC},
the class of the extension (\ref{eqExt}) defines an element 
\begin{equation}
\omega\in G_{\log}^r\otimes_{{\mathbf F}_p} 
\Omega^1_F(\log)\otimes_F
{\mathfrak m}_{\bar K}^{-r}/
{\mathfrak m}_{\bar K}^{-r+}.
\label{eqom}
\end{equation}
This is independent of the choice of
the diagram (\ref{eqPQQ}) and is
called the refined logarithmic conductor of $L$ over $K$.
If $G$ is cyclic and
$\chi\colon G\to {\mathbb C}^\times$ is an injective
abelian character of $G$ and an injection
${\mathbb Z}/p\to {\mathbb C}^\times$ is fixed,
the image of $\omega$ in
$\Omega^1_F(\log)\otimes_F
{\mathfrak m}_{\bar K}^{-r}/
{\mathfrak m}_{\bar K}^{-r+}$
is the refined Swan conductor
\begin{equation}
{\rsw}(\chi)\in 
\Omega^1_F(\log)\otimes_F
{\mathfrak m}_{\bar K}^{-r}/
{\mathfrak m}_{\bar K}^{-r+}.
\label{eqrsw}
\end{equation}
\end{para}

\begin{lem}\label{lmlogrsw}
Let $L$ be a finite Galois extension of $K$
of Galois group $G$.
Let $r$ be the logarithmic conductor
of $L$ over $K$
and 
$\omega\in G^r_{\log}\otimes_{{\mathbf F}_p} 
\Omega^1_F(\log)\otimes_F
{\mathfrak m}_{\bar K}^{-r}/
{\mathfrak m}_{\bar K}^{-r+}$ be
the refined logarithmic conductor.
\begin{enumerate}
\item[\rm 1.]
If the conductor of $L$ over $K$
is the same as the logarithmic conductor $r$ of $L$ over $K$,
then the refined logarithmic conductor $\omega$
is in the image of
$G^r_{\log}\otimes_{{\mathbf F}_p}
\Omega^1_F\otimes_F
{\mathfrak m}_{\bar K}^ {-r}/
{\mathfrak m}_{\bar K}^{-r+}$.
\item[\rm 2.]
Let $K'$ be an extension of
henselian valuation fields of $K$
of ramification index $e$
and of residue field $F'$.
Assume that the image
$\omega'\in G^r_{\log}\otimes_{{\mathbf F}_p} 
\Omega^1_{F'}(\log)\otimes_{F'}
{\mathfrak m}_{\bar K'}^{-r}/
{\mathfrak m}_{\bar K'}^{-r+}$
of the refined logarithmic conductor $\omega$
of $L$ over $K$
is non-trivial.

Then, 
the logarithmic conductor $r'$ of 
a composition field $L'$ over $K'$
equals $er$
and $\omega'$
is the image of
the refined logarithmic conductor 
of $L'$ over $K'$
by the morphism induced by
the injection 
${\rm Gal}(L'/K')_{\log}^{r'}
\to G^r_{\log}$.
\end{enumerate}
\end{lem}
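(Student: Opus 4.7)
The plan is to prove both parts by exploiting the functorial construction of the extension (\ref{eqExt}) underlying the refined logarithmic conductor, together with the compatibilities provided by Lemma \ref{lmN} and the commutative diagram (\ref{eqPQr}).

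For part 1, I would compare the log and non-log extensions. Under the hypothesis $c=c_{\log}=r$, both sides of (\ref{eqPQr}) are honest extensions at the same level: the log torsor (\ref{eqExt}) over $P^{\sim(r)}_{\bar F}$, and a non-log torsor $0\to G^r\to Q^{(r)\circ}_{\bar F}\to P^{(r)}_{\bar F}\to 0$ of an $\bar F$-vector space by $G^r$. The diagram (\ref{eqPQr}) supplies a morphism from the log extension to the non-log one. The key step is to show, when the two conductors coincide, that the natural map $G^r_{\log}\to G^r$ is an isomorphism and that the log extension over $P^{\sim(r)}_{\bar F}$ is the pullback of the non-log torsor along the linear map $P^{\sim(r)}_{\bar F}\to P^{(r)}_{\bar F}$ of (\ref{eqPQr}). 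Since the image of this linear map is precisely the subspace $\mathbf{V}(\Hom_F(\Omega^1_F,\mathfrak{m}_{\bar K}^r/\mathfrak{m}_{\bar K}^{r+}))$ dual to the inclusion $\Omega^1_F\hookrightarrow \Omega^1_F(\log)$, the extension class underlying $\omega$ then factors through $\Omega^1_F$, which gives the desired conclusion.

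For part 2, the strategy is functoriality under the base change $K\to K'$. After reducing $K'$ to the essentially-of-finite-type case by approximation if necessary, I would start from a diagram (\ref{eqPQ0}) for $L/K$ and produce a corresponding diagram for $L'/K'$ by base-changing $P_0, Q_0$ along a suitable $W\to W'$ and performing the analogous dilatations. The linear map $\Omega^1_F(\log)\to\Omega^1_{F'}(\log)$ is induced dually by the base-changed morphism $P^{\sim(er)}_{\bar{F'}}\to P^{\sim(r)}_{\bar F}$, and the pulled-back extension (\ref{eqExt}) at level $er$ represents $\omega'$. If $\omega'$ is non-trivial, this pulled-back extension is non-trivial, which forces the logarithmic conductor $r'$ of $L'$ over $K'$ to satisfy $r'\geq er$; combined with the inequality $r'\leq e\cdot r$ noted in \ref{s43}, we obtain $r'=er$, and the identification of $\omega'$ with the image of the refined logarithmic conductor of $L'/K'$ is then immediate from the same pullback description.

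The main obstacle I anticipate lies in part 1, where I must rigorously identify $G^r$ with $G^r_{\log}$ and the log extension with the pullback of the non-log one in the coincident case. This should reduce to a careful application of Lemma \ref{lmN} comparing the log and non-log variants of the diagram (\ref{eqPQQ}), together with the observation that both ramification filtrations stabilize simultaneously at the common value $r$ of the conductors. The base-change step in part 2 is essentially routine provided one can realize $K'$ geometrically, which is handled by approximation over essentially-of-finite-type subfields of $K'$ large enough for $\omega'$ to be detected.
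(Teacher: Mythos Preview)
Your proposal is correct and follows essentially the same approach as the paper: diagram (\ref{eqPQr}) for part 1, and functoriality producing a diagram of extensions (the paper's (\ref{eqExtb})) for part 2, together with the inequality $r'\leqq e\cdot r$ from \ref{s43}.

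One small simplification for part 1: you do not need to identify $G^r_{\log}$ with $G^r$. The commutative square (\ref{eqPQr}) gives a morphism of extensions, hence an injection $G^r_{\log}\hookrightarrow G^r$ on fibres over $0$ together with the relation
\[
(\text{pushforward of }\omega\text{ along }G^r_{\log}\hookrightarrow G^r)
=(\text{pullback of the non-log class along }P^{\sim(r)}_{\bar F}\to P^{(r)}_{\bar F}).
\]
Since the image of $P^{\sim(r)}_{\bar F}\to P^{(r)}_{\bar F}$ is $\mathbf V(\Hom_F(\Omega^1_F,\mathfrak m_{\bar K}^r/\mathfrak m_{\bar K}^{r+}))$, the right-hand side lies in $G^r\otimes_{\mathbf F_p}\Omega^1_F\otimes_F\mathfrak m_{\bar K}^{-r}/\mathfrak m_{\bar K}^{-r+}$; because $\Omega^1_F$ is a direct summand of $\Omega^1_F(\log)$ and $G^r_{\log}\hookrightarrow G^r$ is injective, $\omega$ itself already lies in $G^r_{\log}\otimes_{\mathbf F_p}\Omega^1_F\otimes_F\mathfrak m_{\bar K}^{-r}/\mathfrak m_{\bar K}^{-r+}$. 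In particular Lemma \ref{lmN} is not needed here; (\ref{eqPQr}) is already constructed in the text. The paper also explicitly reduces to the case where $F$ (and, in part 2, $F'$) is of finite type over a perfect subfield before invoking these diagrams, which you should do as well.
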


\begin{pf}
1.
We may assume that the residue field $F$
of $K$ is of finite type
over a perfect subfield $k$.
Then, the assertion follows from the
commutative diagram
(\ref{eqPQr}).

2.
By the functoriality of construction,
the logarithmic ramification of
$L'$ over $K'$ is bounded by $er+$.
We may assume that the residue fields $F$
and $F'$ 
of $K$ and $K'$ are of finite type
over perfect subfields $k\subset k'$.
Then, further by the functoriality of construction,
we obtain a morphism
$Q^{\prime (er)}_{\bar F'}\to
Q^{(r)}_{\bar F}$ compatible
with the injection $G'={\rm Gal}(L'/K')\to G$
and a commutative diagram
\begin{equation}
\begin{CD}
0@>>> G^{\prime er}_{\log}
@>>> Q^{\prime (er)\circ}_{\bar F'}
@>>>
{\mathbf V}
({\rm Hom}_{F'}
(\Omega^1_{F'}(\log),
{\mathfrak m}_{\bar K'}^{er}/
{\mathfrak m}_{\bar K'}^{er+}))
@>>>0\\
@.@VVV@VVV@VVV@.\\
0@>>> G^r_{\log}
@>>> Q^{(r)\circ}_{\bar F}
@>>>
{\mathbf V}
({\rm Hom}_F
(\Omega^1_F(\log),
{\mathfrak m}_{\bar K}^r/
{\mathfrak m}_{\bar K}^{r+}))
@>>>0
\end{CD}
\label{eqExtb}
\end{equation}
of extensions.
Since $\omega'$ is the extension class
of the pull-back of the lower
line by the right vertical arrow,
the assumption $\omega'\neq 0$
means that the pull-back is non-trivial
and $G^{\prime er}_{\log}\neq 0$.
Hence $er$ is the logarithmic conductor of $L'$ over $K'$.
The last assertion also follows from the diagram
(\ref{eqExtb}).
\end{pf}

\begin{prop}\label{prmain}
Assume that the residue field
$F$ of $K$ is a function field of
one variable over a perfect subfield $k$
of characteristic $p>0$
and that the characteristic of $K$ is $0$.
Let $u\in {\cal O}_K$ be a lifting
of an element $\bar u\in F$
such that $F$ is a finite separable extension
of $k(\bar u)$.

Let $L$ be a finite Galois extension of $K$
of Galois group $G$.
Assume that
the ramification index is $1$ 
and that the residue field $E$ is a purely inseparable
extension of $F$.
Let $v\in {\cal O}_L$ be a lifting
of a generator $\bar v\in E=F(\bar v)$
and let $\varphi\in {\cal O}_K[T]$
be the minimal polynomial of $v$.
Assume that $\varphi\equiv
T^q-\bar u\bmod {\mathfrak m}_K$.

Let $r$ be
the logarithmic conductor of $L$ over $K$.
Assume that $G^r$ is cyclic of order $p$
and identify $G^r=\langle \sigma\rangle$
with ${\mathbf F}_p$ by fixing a
generater $\sigma$.
Then, 
$r={\rm ord}_L\varphi'(v)(v-\sigma(v))$
and the refined logarithmic conductor
of $L$ over $K$ is
$$\dfrac {d\bar u}
{\varphi'(v)(v-\sigma(v))}
\in \Omega^1_F
\otimes_F {\mathfrak m}^{-r}_{\bar K}
/{\mathfrak m}^{-r+}_{\bar K}.$$
\end{prop}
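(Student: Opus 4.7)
The plan is to apply Lemma \ref{lmmono} to the cartesian diagram (\ref{eqmono}) coming from the minimal polynomial $\varphi$ of $v$, and then transfer the resulting Artin-Schreier extension into the framework of the refined logarithmic conductor via Lemma \ref{lmN}.

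Since $e_{L/K}=1$ and $E=F(\bar v)$, the ring ${\cal O}_L$ equals ${\cal O}_K[v]$, so Lemma \ref{lmmono} applies to the diagram (\ref{eqmono}) with $Q=P=\mathbf{A}^1_{{\cal O}_K}$, the left vertical arrow $V\mapsto \varphi(V)=U$, and sections $V\mapsto v$ and $U\mapsto 0$. The cyclicity hypothesis on $G^r$ and Lemma \ref{lmmono}.1 yield the non-logarithmic conductor as ${\rm ord}_L(\varphi'(v)(v-\sigma(v)))$, which by Lemma \ref{lmlog} coincides with the logarithmic conductor $r$ since $e_{L/K}=1$. Lemma \ref{lmmono}.2 moreover identifies the extension (\ref{eqGr}), namely $0\to G^r\to Q^{(r)\circ}_{\bar F}\to P^{(r)}_{\bar F}\to 0$, as the Artin-Schreier extension $T\mapsto T^p-T$, via the isomorphism $\mathbf{A}^1_{\bar F}\to P^{(r)}_{\bar F}$ multiplying by $\varphi'(v)(v-\sigma(v))$ in the natural coordinate on $P=\mathbf{A}^1_{{\cal O}_K}$.

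To compute the refined logarithmic conductor $\omega\in \Omega^1_F(\log)\otimes_F{\mathfrak m}_{\bar K}^{-r}/{\mathfrak m}_{\bar K}^{-r+}$, embed ${\cal O}_K$ into the general framework of Lemma \ref{lmPQ}: realize ${\cal O}_K$ as the henselization of a local ring on a smooth scheme $P_0$ over $W=W(k)$, with $\bar u$ lifting to a coordinate of $P_0$ and the section ${\rm Spec}\,{\cal O}_K\hookrightarrow P_1:=P_0\times_W{\cal O}_K$ sending this coordinate to the chosen lift $\tilde u\in{\cal O}_K$. A morphism $P_2:=\mathbf{A}^1_{{\cal O}_K}\to P_1$ compatible with the sections is given by sending $u\mapsto \tilde u+U$ (and the other coordinates to their specified lifts); the induced map of conormals $N_1\otimes F\to N_2\otimes F$ then sends $d\bar u\mapsto dU$ and the remaining generators to $0$. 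By Lemma \ref{lmN}, the Artin-Schreier extension of Lemma \ref{lmmono}.2 over $P_2^{(r)}_{\bar F}$ is the pullback along $P_2^{(r)}_{\bar F}\to P_1^{(r)}_{\bar F}$ of the extension (\ref{eqExt}) whose class is $\omega$. By Lemma \ref{lmlogrsw}.1 (applicable since $e_{L/K}=1$), $\omega$ lies in the image of $\Omega^1_F\otimes_F{\mathfrak m}_{\bar K}^{-r}/{\mathfrak m}_{\bar K}^{-r+}$, so is determined by its pullback $dU/(\varphi'(v)(v-\sigma(v)))$ in $F\cdot dU\otimes_F{\mathfrak m}_{\bar K}^{-r}/{\mathfrak m}_{\bar K}^{-r+}$; matching via $d\bar u\mapsto dU$ yields $\omega=d\bar u/(\varphi'(v)(v-\sigma(v)))$.

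The main obstacle will be carefully tracking the extension-class formalism of \cite[Proposition 1.20]{WC} through the comparison maps: one must verify that the Artin-Schreier class from Lemma \ref{lmmono}.2 is indeed the pullback, in the category of extensions of vector group schemes by $\mathbf{F}_p$, of the refined logarithmic conductor along $P_2^{(r)}_{\bar F}\to P_1^{(r)}_{\bar F}$, and check the direction and sign conventions of the induced map on conormals. Once these verifications are in place, the stated formula for the refined logarithmic conductor follows by linearity.
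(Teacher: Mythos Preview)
Your approach is essentially the paper's: compute the conductor and the $G^r$-torsor explicitly via Lemma \ref{lmmono} on ${\mathbf A}^1_{{\cal O}_K}$, then transport this to the framework where the refined logarithmic conductor is defined using Lemma \ref{lmN}. The identification of $r$ via Lemma \ref{lmlog} and Lemma \ref{lmmono}.1 is exactly as in the paper.

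There is one imprecision worth flagging. You write that the Artin--Schreier extension over $P_2^{(r)}_{\bar F}$ is the pullback of ``the extension (\ref{eqExt}) whose class is $\omega$'', but (\ref{eqExt}) lives over $P^{\sim(r)}_{\bar F}$, not over your $P_1^{(r)}_{\bar F}=P^{(r)}_{\bar F}$. What Lemma \ref{lmN} actually compares is the torsor $Q^{(r)\circ}_{\bar F}\to P^{(r)}_{\bar F}$; one then needs the commutative diagram (\ref{eqPQr}) to relate its class to $\omega$ (this is precisely the content behind the proof of Lemma \ref{lmlogrsw}.1, not just its statement). The paper sidesteps this bookkeeping by inserting the intermediate $P_1=P_0\times_{{\mathbf A}^1_W}{\cal O}_K$, whose conormal along the section is exactly $\Omega^1_F$, so that the class of $Q_{1,\bar F}^{(r)\circ}\to P_{1,\bar F}^{(r)}$ is literally $\omega$; this makes the comparison with ${\mathbf A}^1$ one step rather than two. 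You should also note that applying Lemma \ref{lmN} requires the closed-fiber diagram on the $Q$-side as well; the paper supplies this via the \'etale charts $P_0\times_{{\mathbf A}^1_W}k\to {\mathbf A}^1_k$ and $Q_0\times_{{\mathbf A}^1_W}k\to {\mathbf A}^1_k$ coming from the hypothesis $\varphi\equiv T^q-\bar u\bmod {\mathfrak m}_K$, which your sketch leaves implicit.
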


Almost the same result 
as Proposition \ref{prmain} is proved in 
\cite[Theorem 5.9]{Hu} in a similar way.
Although we assume that $K$ is of mixed characteristic in
Proposition \ref{prmain},
the same assertion is proved more easily in the equal characteristic case.

\begin{pf}
By Lemma \ref{lmlog},
the logarithmic conductor equals
the conductor.
The equality 
$r={\rm ord}_L\varphi'(v)(v-\sigma(v))$
follows then from Lemma \ref{lmmono}.
We use the notation in
Lemma \ref{lmPQ}.2.
Since $D_0\subset P_0$ is smooth over $W$,
there exists a smooth morphism
$P_0\to {\mathbf A}^1_W$
such that $D_0$ is the pull-back of
the $0$-section ${\rm Spec}\, W\to {\mathbf A}^1_W$.
By the assumption that $e=1$,
the divisor $E_0\subset Q_0$ is also the pull-back of
the $0$-section ${\rm Spec}\, W\to {\mathbf A}^1_W$.

Let $P_1=P_0\times_{{\mathbf A}^1_W}{\cal O}_K$
and $Q_1=Q_0\times_{{\mathbf A}^1_W}{\cal O}_K$
be the fiber products with respect to
the composition ${\cal O}_K\to P_0\to
{\mathbf A}^1_W$.
Then $P_1$ and $Q_1$ are
also smooth over ${\cal O}_K$ and we have a cartesian diagram
$$\begin{CD}
Q@<<< Q_1@<<< {\rm Spec}\, {\cal O}_L\\
@VVV@VVV@VVV\\
P@<<< P_1@<<< {\rm Spec}\, {\cal O}_K.
\end{CD}$$
By this and Lemma \ref{lmlogrsw},
the refined logarithmic Swan conductor
is the image of the class of
the extension
\begin{equation}
0\to G^r\to
Q^{(r)\circ}_{1, \bar F}
\to P^{(r)}_{1, \bar F}
={\mathbf V}(\Omega^1_F
\otimes_F {\mathfrak m}_{\bar K}^{-r}
/{\mathfrak m}_{\bar K}^{-r+})\to 0
\label{eqExt1}
\end{equation}
defined as the restriction of the $G^r$-torsor
$Q^{(r)\circ }_{\bar F}$ over
$P^{(r)}_{\bar F}$.

We compute the extension (\ref{eqExt1})
by comparing it using Lemma \ref{lmN}
with that defined by the cartesian diagram (\ref{eqmono})
in Lemma \ref{lmmono}.
Since the diagram
$$\begin{CD}
Q_0\times_{{\mathbf A}^1_W}k
@<<< {\rm Spec}\, E\\
@VVV@VVV\\
P_0\times_{{\mathbf A}^1_W}k
@<<< {\rm Spec}\, F
\end{CD}$$
is commutative
and the horizontal arrows
are the canonical morphisms of the generic points,
we have a commutative diagram
$$\begin{CD}
{\rm Spec}\, k[V]=&
{\mathbf A}^1_k
@<<<Q_0\times_{{\mathbf A}^1_W}k
\\
&@V{U\mapsto V^q}VV@VVV\\
{\rm Spec}\, k[U]=&
{\mathbf A}^1_k
@<<<
P_0\times_{{\mathbf A}^1_W}k
\end{CD}$$
where the horizontal arrows are \'etale.
This induces a commutative diagram
$$\begin{CD}
{\rm Spec}\, F[V]=&
{\mathbf A}^1_F
@<<<Q_1\times_{{\cal O}_K}F
@<<< {\rm Spec}\, E
\\
&@V{U\mapsto V^q}VV@VVV@VVV\\
{\rm Spec}\, F[U]=&
{\mathbf A}^1_F
@<<<P_1\times_{{\cal O}_K}F
@<<< {\rm Spec}\, F.
\end{CD}$$
By comparing this with
the cartesian diagram
$$\begin{CD}
{\rm Spec}\, F[V]=&
{\mathbf A}^1_F
@>>>{\mathbf A}^1_{{\cal O}_K}
@<<< {\rm Spec}\, {\cal O}_L
\\
&@V{U\mapsto V^q}VV@V{U\mapsto \varphi(V)}VV
@VVV\\
{\rm Spec}\, F[U]=&
{\mathbf A}^1_F
@>>>{\mathbf A}^1_{{\cal O}_K}
@<<< {\rm Spec}\, {\cal O}_K
\end{CD}$$
obtained by (\ref{eqmono}),
we obtain a commutative diagram 
$$\begin{CD}
{\mathbf A}^1_{\bar F}@>>>Q^{(r)}_{1, \bar F}\\
@VVV@VVV\\
{\mathbf A}^1_{\bar F}@>>>
P^{(r)}_{1, \bar F}
\end{CD}$$
by Lemma \ref{lmN}.
Since the left vertical arrow is as in Lemma \ref{lmmono}.2
and the bottom isomorphism
is defined by $U-u$,
the assertion follows.
\end{pf}

\section{Coincidence of Swan conductors and of refined Swan conductors}\label{s4}

We prove properties of $\Sw^{\ab}$ and $\rsw^{\ab}$
similar to Lemma \ref{lmlogrsw}.2 and Proposition \ref{prmain}.

\begin{prop}\label{SwK'K}
Let $K$ be a complete discrete valuation field
and let $\chi\colon {\rm Gal}(L/K)\to {\mathbb C}^\times$
be a character for a finite abelian extension $L$ of $K$.
Let $K'$ over $K$ be an extension of
complete discrete valuation fields of ramification index $e=e(K'/K)$
and let $\chi'\colon {\rm Gal}(LK'/K')\to {\mathbb C}^\times$
be the composition of $\chi$ with
the canonical morphism ${\rm Gal}(LK'/K')\to{\rm Gal}(L/K)$.
\begin{enumerate}
\item[\rm 1.]
We have
$\Sw^{\ab}_{K'}\chi'\leq e\cdot \Sw^{\ab}_K\chi$.
\item[\rm 2.]
Assume that
$r=\Sw^{\ab}_K\chi\geq 1$.
Then, the following conditions are equivalent:
\begin{enumerate}
\item[\rm (1)]
We have
$\Sw^{\ab}_{K'}\chi'= e\cdot \Sw^{\ab}_K\chi$.
\item[\rm (2)]
The image of $\rsw^{\ab}\chi$ by the canonical morphism
$$\mathfrak m_K^{-r}/\mathfrak m_K^{-r+1}
\otimes_{{\cal O}_K} \Omega^1_{{\cal O}_K}(\log)
\to
\mathfrak m_{K'}^{-er}/\mathfrak m_{K'}^{-er+1}
\otimes_{{\cal O}_{K'}} \Omega^1_{{\cal O}_{K'}}(\log)$$
is non-zero.
\end{enumerate}
If the equivalent conditions hold,
$\rsw^{\ab}\chi'$ equals the image of $\rsw^{\ab}\chi$.
\end{enumerate}
\end{prop}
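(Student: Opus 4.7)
The plan is to deduce Part 1 directly from the definition of $\Sw^{\ab}$ together with the naturality of the cup-product pairing, and to deduce Part 2 from the symbol-theoretic description of $\rsw^{\ab}\chi$ recalled in \cite{KKs}.

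For Part 1, set $r=\Sw^{\ab}_K\chi$. Since $e=e(K'/K)$, the equality of ideals ${\mathfrak m}_K{\cal O}_{K'}={\mathfrak m}_{K'}^{e}$ gives ${\mathfrak m}_K^r{\cal O}_{K'}={\mathfrak m}_{K'}^{er}$. Hence for any further extension of complete discrete valuation fields $K'\to K''$ we obtain the inclusion
$$1+{\mathfrak m}_{K'}^{er}{\mathfrak m}_{K''} \;\subseteq\; 1+{\mathfrak m}_K^r{\mathfrak m}_{K''}.$$
Since $\chi'$ is by construction the restriction of $\chi$ to $\Gal(\bar K/K'')$, naturality of the cup product gives $\{\chi',a\}=\{\chi,a\}$ in $\Br(K'')$ for every $a\in(K'')^\times$. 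Applied to $a\in 1+{\mathfrak m}_{K'}^{er}{\mathfrak m}_{K''}$, the right-hand side vanishes by the definition of $\Sw^{\ab}_K\chi=r$, yielding $\Sw^{\ab}_{K'}\chi'\leq er$.

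For Part 2, the essential input from \cite{KKs} is the following: assuming $r=\Sw^{\ab}_K\chi\geq 1$, the induced pairing $\{\chi,\cdot\}$ on the graded quotient $(1+{\mathfrak m}_K^{r-1}{\mathfrak m}_{K''})/(1+{\mathfrak m}_K^{r}{\mathfrak m}_{K''})$ is non-zero for a suitable extension $K''$, and its value on test elements is given by an explicit symbol formula in $\rsw^{\ab}\chi$ paired against differentials in $\Omega^1_{{\cal O}_{K''}}(\log)$. For the implication (2)$\Rightarrow$(1), suppose the image of $\rsw^{\ab}\chi$ in ${\mathfrak m}_{K'}^{-er}/{\mathfrak m}_{K'}^{-er+1}\otimes_{{\cal O}_{K'}}\Omega^1_{{\cal O}_{K'}}(\log)$ is non-zero. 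Applying the characterization with $K'$ in place of $K$ and $er$ in place of $r$, we find an extension $K'\to K''$ and an element $a\in 1+{\mathfrak m}_{K'}^{er-1}{\mathfrak m}_{K''}$ with $\{\chi',a\}\neq 0$ in $\Br(K'')$. Hence $\Sw^{\ab}_{K'}\chi'\geq er$, which combined with Part 1 gives equality.

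For (1)$\Rightarrow$(2) and the final assertion, if $\Sw^{\ab}_{K'}\chi'=er$ then $\rsw^{\ab}\chi'$ is a non-zero element of ${\mathfrak m}_{K'}^{-er}/{\mathfrak m}_{K'}^{-er+1}\otimes_{{\cal O}_{K'}}\Omega^1_{{\cal O}_{K'}}(\log)$. By the naturality of Kato's construction under $K\to K'$ — both $\rsw^{\ab}\chi'$ and the image of $\rsw^{\ab}\chi$ record the same leading-order behaviour of $\{\chi,-\}$ on the common filtration by $1+{\mathfrak m}_K^n{\mathfrak m}_{K''}=1+{\mathfrak m}_{K'}^{en}{\mathfrak m}_{K''}$ — the element $\rsw^{\ab}\chi'$ coincides with the image of $\rsw^{\ab}\chi$, which in particular must be non-zero. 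The main obstacle is precisely this functoriality: one must verify that Kato's symbol-theoretic definition of $\rsw^{\ab}$ commutes with the canonical map
$${\mathfrak m}_K^{-r}/{\mathfrak m}_K^{-r+1}\otimes_{{\cal O}_K}\Omega^1_{{\cal O}_K}(\log)\;\longrightarrow\;{\mathfrak m}_{K'}^{-er}/{\mathfrak m}_{K'}^{-er+1}\otimes_{{\cal O}_{K'}}\Omega^1_{{\cal O}_{K'}}(\log).$$
Once this compatibility is established, the rest reduces to elementary manipulations of unit filtrations under the multiplication $\pi_K=u\pi_{K'}^e$.
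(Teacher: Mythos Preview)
Your approach is essentially the same as the paper's: both rely on the functoriality of Kato's construction in \cite{KKs} and on the cup-product/symbol description of $\rsw^{\ab}$. The paper, however, dispatches the whole argument in a few lines by citing \cite[Proposition~(6.3)]{KKs} directly and by working with a single \emph{universal} test extension: condition~(2) is reformulated as $\{\chi',1+\pi^rT\}\neq 0$ in $\Br(L')$, where $L'$ is the fraction field of the henselization of ${\cal O}_{K'}[T]_{(\pi')}$, and this is immediately equivalent to~(1). The compatibility $\rsw^{\ab}\chi'=\text{image of }\rsw^{\ab}\chi$ then follows because the defining formula $\{\chi,1+\pi^rT\}=\lambda_\pi(T\alpha,T\beta)$ is visibly stable under base change.

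Your write-up has one soft spot worth tightening. In your argument for (2)$\Rightarrow$(1) you say ``applying the characterization with $K'$ in place of $K$ and $er$ in place of $r$'' to produce a test element for $\chi'$. But that characterization, as you stated it, describes $\rsw^{\ab}\chi'$---an object you do not yet know exists at level $er$, since $\Sw^{\ab}_{K'}\chi'=er$ is precisely what you are trying to prove. What you actually need (and what the paper's universal-$T$ formulation makes transparent) is that the \emph{image} of $\rsw^{\ab}\chi$ already controls the pairing $\{\chi',-\}$ on the relevant graded piece over $K'$; this is exactly the functoriality you later flag as ``the main obstacle.'' So the circularity dissolves once you invoke that compatibility up front---which is what \cite[Proposition~(6.3)]{KKs} supplies---rather than deferring it to the end.
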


\begin{proof}
1.
Let $r=\Sw^{\ab}_K\chi$ and $\pi$ be a prime element of $K$.
By \cite[Proposition (6.3)]{KKs},
we have $\{\chi',1+\pi^r{\mathfrak m}_{K'}\}=0$
and the assertion follows.

2.
The condition (2) is equivalent to that
$\{\chi',1+\pi^rT\}\neq 0$
in ${\rm Br}(L')$ where
$L'$ is the field of fractions of
the henselization of ${\cal O}_{K'}[T]_{(\pi')}$
for a prime element $\pi'$ of $K'$.
Hence, this is equivalent to (1).
Further, since the equality 
$\{\chi,1+\pi^rT\}=\lambda_\pi(T\alpha,T\beta)$
is compatible with base change,
$\rsw^{\ab}\chi'$ equals the image of $\rsw^{\ab}\chi$.
\end{proof}

\begin{prop}\label{Swe=1}
Let $K$ be a complete discrete valuation field such
that the residue field $F$ is of
characteristic $p>0$ and $[F:F^p]=p$.
Let 
$\chi\colon {\rm Gal}(L/K)\to {\mathbb C}^\times$
be a faithful character for a cyclic extension $L$ of $K$
of degree $q=p^e$
such that $e(L/K)=1$
and that the residue field $E$ of $L$
is a purely inseparable extension of $F$.

Let $v\in {\cal O}_L$ be a lifting
of a generator $\bar v\in E=F(\bar v)$
and let $\varphi\in {\cal O}_K[T]$
be the minimal polynomial of $v$.
Let $\sigma\in  {\rm Gal}(L/K)$ be an element of
order $p$ and set $\bar u=\bar v^q\in F$ and
$r={\rm ord}_L\varphi'(v)(v-\sigma(v))$.

Then, we have $\Sw^{\ab}\chi=r$ and
$$\rsw^{\ab}\chi=\dfrac {d\bar u}
{\varphi'(v)(v-\sigma(v))}
\in {\mathfrak m}^{-r}_K
/{\mathfrak m}^{-r+1}_K
\otimes_F \Omega^1_F
\subset {\mathfrak m}^{-r}_K
/{\mathfrak m}^{-r+1}_K
\otimes_{{\cal O}_K} \Omega^1_{{\cal O}_K}(\log).
$$
\end{prop}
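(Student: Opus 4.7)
The plan is to compute $\Sw^{\ab}(\chi)$ and $\rsw^{\ab}(\chi)$ directly from Kato's cup-product definition, exploiting the concrete description $\mathcal{O}_L=\mathcal{O}_K[v]$ and the cyclicity of $L/K$.

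First, I would unravel the hypotheses to put the formula into a usable form. Since $e(L/K)=1$ and $E=F(\bar v)$ is purely inseparable of degree $q=p^e$ over $F$ with $\bar v^q=\bar u$, the minimal polynomial $\varphi(T)\in\mathcal{O}_K[T]$ of $v$ reduces modulo $\mathfrak{m}_K$ to $T^q-\bar u$, and $\mathcal{O}_L=\mathcal{O}_K[v]$. In particular, for any lift $u\in\mathcal{O}_K$ of $\bar u$ we have $v^q-u\in\pi\mathcal{O}_L$, whence $du\equiv qv^{q-1}\,dv$ modulo $\pi\,\Omega^1_{\mathcal{O}_L/W}$. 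This identity will be used to check that the stated expression $d\bar u/(\varphi'(v)(v-\sigma(v)))$, a priori $E$-valued, actually lies in the $F$-summand $\mathfrak{m}_K^{-r}/\mathfrak{m}_K^{-r+1}\otimes_F\Omega^1_F$, with no $d\log\pi$ component.

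Second, since $\chi$ is a faithful character of the cyclic group $\Gal(L/K)$, the theory of cyclic algebras gives $\{\chi,a\}=0$ in $\Br(K)$ iff $a\in N_{L/K}(L^\times)$, and likewise after any CDVF base change $K'/K$. Thus $\Sw^{\ab}(\chi)$ is the least integer $n$ such that $1+\pi^n\mathfrak{m}_{K'}\subset N_{LK'/K'}((LK')^\times)$ for every CDVF extension $K'/K$. I would use the classical trace formula $\Tr_{L/K}(v^i/\varphi'(v))=\delta_{i,q-1}$ for $0\le i\le q-1$ to control, modulo higher-order terms, the image of $1+\mathfrak{m}_L^s$ under $N_{L/K}$. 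The order-$p$ element $\sigma$ enters through the quantity $v-\sigma(v)$: it is the generator of the Artin--Schreier/Kummer class associated to the order-$p$ subquotient of $\Gal(L/K)$, and its valuation governs the non-triviality of $\{\chi,a\}$ at the critical level $n=r=\mathrm{ord}_L(\varphi'(v)(v-\sigma(v)))$.

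Third, for the refined Swan conductor I would match the computation with Kato's explicit symbol formula from \cite{Ksd}: for $a=1+\pi^r t$ with $t\in\mathcal{O}_K$, tracing $\{\chi,a\}$ through the cyclic-algebra description yields, at leading order, a residue pairing of $\bar t\,du$ against the inverse of $\varphi'(v)(v-\sigma(v))$ in $\mathfrak{m}_K^{-r}/\mathfrak{m}_K^{-r+1}$. This identifies $\rsw^{\ab}(\chi)$ with $d\bar u/(\varphi'(v)(v-\sigma(v)))$. The main obstacle will be the second-order norm computation: one must control the quadratic (and, for $e\ge 2$, higher) Taylor coefficients of $N_{L/K}(1+y)$ and push the analysis through an arbitrary CDVF base change $K'/K$ in order to detect the precise $\Sw^{\ab}$. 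Additionally, one must verify that the resulting obstruction class lies in the non-logarithmic summand of $\Omega^1_{\mathcal{O}_K}(\log)$, which reflects the hypothesis $e(L/K)=1$ via the identity $du\equiv qv^{q-1}\,dv\pmod{\pi}$ established in the first step.
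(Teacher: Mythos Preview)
The paper's own proof is a bare citation: it invokes \cite[Proposition (6.3)]{KKs} and \cite[Theorem (3.6)]{Ksd} and nothing else. Your plan, by contrast, proposes to redo by hand the norm/trace computation underlying those two references in the monogenic case $\mathcal{O}_L=\mathcal{O}_K[v]$, and then to read off $\rsw^{\ab}(\chi)$ from Kato's explicit symbol. In spirit this is the same argument, just unpacked: the identification of $\{\chi,-\}$ with the norm-residue obstruction, the control of $N_{L/K}(1+\mathfrak m_L^s)$ via the Euler--Lagrange trace formula $\Tr(v^i/\varphi'(v))=\delta_{i,q-1}$, and the passage to the differential form $d\bar u/(\varphi'(v)(v-\sigma v))$ are exactly the ingredients of \cite{Ksd}. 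So your route is not wrong, merely longer; for the purposes of this paper the two citations suffice.

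One caution if you do carry out the direct computation. Your reformulation ``$\Sw^{\ab}(\chi)$ is the least $n$ with $1+\pi^n\mathfrak m_{K'}\subset N_{LK'/K'}((LK')^\times)$ for every CDVF extension $K'/K$'' quantifies over all $K'$, and the hard direction is producing a specific $K'$ on which $1+\pi^{r-1}\mathfrak m_{K'}$ is \emph{not} contained in the norm group. In \cite{KKs} this is handled by testing on the canonical extension $K'$ whose residue field adjoins a generic variable (so that the residue pairing detects the leading term), and it is precisely \cite[Proposition (6.3)]{KKs} that packages this. Your outline gestures at ``pushing the analysis through an arbitrary CDVF base change'' but does not isolate the witness $K'$; without that step the lower bound $\Sw^{\ab}(\chi)\geq r$ is not established. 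Likewise, the verification that the class lands in $\Omega^1_F$ rather than $\Omega^1_F(\log)$ is in the cited references a consequence of the structure theorem for $\rsw^{\ab}$, not of the congruence $du\equiv qv^{q-1}dv\pmod{\pi}$ alone (that congruence is vacuous when $p\mid q$). If you want a self-contained argument, you will need to reproduce those two points; otherwise, the citation is both shorter and complete.
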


\begin{proof}
The assertion follows from \cite[Proposition (6.3)]{KKs}
and \cite[Theorem (3.6)]{Ksd}.
\end{proof}

\begin{para}
We prove Theorems \ref{th01} and \ref{th02}. 

Let $K$ be a complete discrete valuation
field with residue field of characteristic $p>0$.
We may assume that $K$ is of characteristic $0$.
Let $L$ be a finite cyclic extension of $K$
and $\chi\colon {\rm Gal}(L/K)\to {\mathbb C}^\times$ be a faithful character.
We may assume that $L$ is not
tamely ramified.
We may further assume that
the residue field $F$ of $K$
is of finite type over a perfect subfield $k$,
by a standard limit argument.

By Theorem \ref{th2},
there exists an extension $K'$ over $K$
of complete discrete valuation
fields satisfying the conditions (i), (ii)' and (iii) in Theorem \ref{th2}.
Let $e=e(K'/K)$ be the ramification index
and 
$\chi'\colon {\rm Gal}(LK'/K')\to {\mathbb C}^\times$ be the character
induced by $\chi$.
Then, by the condition (iii),
the images of ${\rm rsw}(\chi)$
and ${\rm rsw}^{\rm ab}(\chi)$ are
non-zero. 

Hence
by Proposition \ref{SwK'K}
and Lemma \ref{lmlogrsw}.2, 
we have 
${\rm Sw}(\chi')=e\cdot {\rm Sw}(\chi)$
and 
${\rm Sw}^{\rm ab}(\chi')=e\cdot {\rm Sw}^{\rm ab}(\chi)$.
Further ${\rm rsw}(\chi')$
and ${\rm rsw}^{\rm ab}(\chi')$
are the images of
${\rm rsw}(\chi)$
and ${\rm rsw}^{\rm ab}(\chi)$
respectively.
Thus the equality
${\rm Sw}(\chi)={\rm Sw}^{\rm ab}(\chi)$
is equivalent to
${\rm Sw}(\chi')={\rm Sw}^{\rm ab}(\chi')$.
Further by the condition (iii) in Theorem \ref{th2},
the equality
${\rm rsw}(\chi')=
{\rm rsw}^{\rm ab}(\chi')$
is equivalent to
${\rm rsw}(\chi)={\rm rsw}^{\rm ab}(\chi)$.
Thus, we may assume that the conditions (i) and (ii)' in Theorem \ref{th2}
are satisfied.
In this case, the assertion follows from Propositions \ref{Swe=1} and  \ref{prmain}.
\end{para}

\bibliographymark{References}

\providecommand{\bysame}{\leavevmode\hbox to3em{\hrulefill}\thinspace}
\providecommand{\arXiv}[2][]{\href{https://arxiv.org/abs/#2}{arXiv:#1#2}}
\providecommand{\MR}{\relax\ifhmode\unskip\space\fi MR }
\providecommand{\MRhref}[2]{%
  \href{http://www.ams.org/mathscinet-getitem?mr=#1}{#2}
}
\providecommand{\href}[2]{#2}


\begin{thebibliography}{99}
\addcontentsline{toc}{section}{References}

\bibitem{AS}
A. Abbes and T. Saito,
\emph{Ramification of local fields with imperfect residue fields},
Amer. J. Math. {\bf 124} (2002), no. 5, 879--920.
\MR{1925338}

\bibitem{AS2}
A. Abbes and T. Saito, 
\emph{Ramification of local fields with imperfect residue fields. II}.
Doc. Math. (2003), extra vol. (Kazuya Kato's fiftieth birthday), 5--72.
\MR{2046594}


\bibitem{AS22}
A. Abbes and T. Saito,
\emph{Analyse micro-locale $l$-adique en caract\'eristique $p>0$. Le cas d'un trait}.
Publ. Res. Inst. Math. Sci., Kyoto Univ. {\bf 45} (2009), no. 1, 25--74. 
\MR{2512777}


\bibitem{Ep}
H. P. Epp,
\emph{Eliminating wild ramification}, 
 Invent. Math. {\bf 19} (1973), 235--249.
 \MR{0321929}

\bibitem{Hu}
H. Hu,
\emph{Ramification and nearby cycles for $\ell$-adic sheaves on relative curves}, 
Tohoku Math. J. (2) {\bf 67} (2015), no. 2, 153--194.
\MR{3365369}


\bibitem{Ksd}
K. Kato,
\emph{Swan conductors with differential values}.
In: Galois representations and arithmetic algebraic geometry (Kyoto, 1985/Tokyo, 1986), pp. 315--342,
Adv. Stud. Pure Math., vol. 12, North-Holland, Amsterdam, 1987.
\MR{0948251}

\bibitem{KKs}
K. Kato,
\emph{Swan conductors for characters of degree 
one in the imperfect residue field case}.
In: Algebraic $K$-theory and algebraic number theory (Honolulu, HI, 1987), pp. 101--131,
Contemp. Math., vol. 83, Amer. Math. Soc., Providence, RI, 1989. 
\MR{0991978}

\bibitem{Ku}
F.-V. Kuhlmann,
\emph{A correction to Epp's paper ``Elimination of wild
ramification''},  
Invent. Math. {\bf 153} (2003), no. 3, 679--681.
\MR{2000472}

\bibitem{Jus}
T. Saito,
\emph{Wild ramification and the characteristic cycle of an $\ell$-adic sheaf}, 
J. Inst. Math. Jussieu {\bf 8} (2009), no. 4, 769--829.
\MR{2540880}

\bibitem{Sa} 
T. Saito,
\emph{Ramification of local fields with imperfect residue fields III},  
Math. Ann. {\bf 352} (2012), no. 3, 567--580.
\MR{2885588}



\bibitem{WC}
T. Saito,
\emph{Wild ramification and the cotangent bundle}, 
J. Algebraic Geom. {\bf 26} (2017), no. 3, 399--473.
\MR{3647790}

\bibitem{Sa2}
T. Saito,
\emph{Ramification groups of coverings and valuations}, 
Tunis. J. Math. {\bf 1} (2019), no. 3, 373--426.
\MR{3907745}



\bibitem{CL}
J-P. Serre,
\emph{Corps Locaux}, deuxi\`eme \'edition,
 Publications de l'Universit\'e de Nancago, No. VIII, Hermann, Paris, 1968.
\MR{0354618}




\bibitem{Sp}
L. Spriano,
\emph{On ramification theory of monogenic extensions}.
In: Invitation to higher local fields (M\"unster, 1999), pp. 151--164,
Geom. Topol. Monogr., vol. 3, Geom. Topol. Publ., Coventry, 2000.
\MR{1804932}

\bibitem{Ya}
Y. Yatagawa,
\emph{Equality of two non-logarithmic ramification filtrations of abelianized Galois groups in positive characteristics},
Doc.~Math.~{\bf 22} (2017), 917--952.
\MR{3665400}
\end{thebibliography}
\end{document}